\theoremstyle{plain}
\newtheorem{theorem}{Theorem} 
\newtheorem{lemma}{Lemma}
\newtheorem{corollary}{Corollary}
\newtheorem{proposition}{Proposition}
\begin{document}
\newenvironment{pf}{\emph{Proof.}~}{}

\newcommand{\down}[1]{\raisebox{-.4ex}{#1}} 
\newcommand{\plus}{\ding{59}}
\newcommand{\lf}{\texttt{leaf}}
\newcommand{\true}{\textit{true}\xspace}
\newcommand{\false}{\textit{false}\xspace}

\makeatletter
    \renewcommand*{\@fnsymbol}[1]{\ensuremath{\ifcase#1\or *\or \dagger\or \S\or
       \mathsection\or \mathparagraph\or \|\or **\or \dagger\dagger
       \or \ddagger\ddagger \else\@ctrerr\fi}}
\makeatother

\title{\LARGE\bf On the Planar Split Thickness of Graphs\thanks{A preliminary version
  of this work appeared at the 12th Latin American Theoretical Informatics 
  Symposium (LATIN'16)~\cite{ekk+-opstg-LATIN16}.}\vspace{.5em}} 

\setlength{\affilsep}{2em}
\author[1]{David~Eppstein}
\author[2]{Philipp~Kindermann}
\author[3]{Stephen~Kobourov}
\author[4]{Giuseppe~Liotta}
\author[5]{Anna~Lubiw}
\author[6]{Aude~Maignan}
\author[5]{Debajyoti~Mondal}
\author[5]{Hamideh~Vosoughpour}
\author[7]{Sue~Whitesides}
\author[8]{Stephen~Wismath}

\affil[1]{University of California, Irvine, USA
  \texttt{\url{eppstein@uci.edu}}}
\affil[2]{FernUniversit\"at in Hagen, Germany.
  \texttt{\url{philipp.kindermann@fernuni-hagen.de}}}
\affil[3]{University of Arizona, USA.
  \texttt{\url{kobourov@cs.arizona.edu}}}
\affil[4]{Universit\`a degli Studi di Perugia, Italy.
  \texttt{\url{giuseppe.liotta@unipg.it}}}
\affil[5]{University of Waterloo, Canada.
  \texttt{\mbox{\{\href{mailto:alubiw@uwaterloo.ca}{alubiw} | \href{mailto:dmondal@uwaterloo.ca}{dmondal} | \href{mailto:hvosough@uwaterloo.ca}{hvosough}\} @uwaterloo.ca}}}
\affil[6]{Universit. Grenoble Alpes, France.
  \texttt{\url{aude.maignan@univ-grenoble-alpes.fr}}}
\affil[7]{University of Victoria, Canada.
  \texttt{\url{sue@uvic.ca}}}
\affil[8]{University of Lethbridge, Canada.
  \texttt{\url{wismath@uleth.ca}}}
\date{}

\maketitle
\setcounter{footnote}{0}
\begin{abstract}
Motivated by applications in graph drawing and information visualization,
we examine the planar split thickness of a graph, that is, the smallest~$k$ 
such that the graph is $k$-splittable into a planar graph. 
A $k$-split operation substitutes
a vertex $v$ by at most $k$ new vertices such that each neighbor of $v$
is connected to at least one of the new vertices.

We first examine the planar split thickness of complete graphs, complete 
bipartite graphs, multipartite graphs, bounded degree graphs, and genus-1 graphs.
We then prove that it is NP-hard to recognize graphs that 
are $2$-splittable into a planar graph, and show that one can approximate the 
planar split thickness of a graph within a constant factor. If the treewidth 
is bounded, then we can even verify $k$-splittability  in linear time, for a 
constant~$k$. 
\end{abstract}

\section{Introduction}
Transforming one graph into another by repeatedly applying an operation such 
as vertex/edge  deletion, edge flip or vertex split  is a classic problem in 
graph theory~\cite{liebers2001}. In this paper, we examine graph 
transformations under the vertex split operation. Specifically, a 
\emph{$k$-split operation} at some vertex~$v$ inserts at most~$k$ new 
vertices $v_1,v_2,\ldots,v_k$ in the graph, then, for each neighbor~$w$ of~$v$,
adds at least one
edge~$(v_i,w)$ where~$i\in[1,k]$, and finally deletes~$v$ along with its incident edges.
We define a \emph{$k$-split} of graph $G$ as a graph~$G^k$ that is obtained by 
applying a $k$-split to each vertex of~$G$ at most once.
We say that $G$ is \emph{$k$-splittable} into $G^k$.  
If $\mathcal{G}$ is a graph property, we say that $G$ is \emph{$k$-splittable} into a $\mathcal G$ graph  (or ``\emph{$k$-splittable} into $\mathcal{G}$'') if there is a $k$-split of $G$ that has property $\mathcal{G}$.
We introduce the \emph{$\mathcal{G}$ split thickness} of a graph~$G$ as the 
minimum integer~$k$ such that~$G$ is $k$-splittable into a 
$\mathcal{G}$ graph.

Graph transformation via vertex splits 
is important
in graph drawing and information visualization~\cite{GansnerHK10,RicheD10}. For example, suppose that we want to visualize the subset relation among a collection~$S$ of~$n$ sets. Construct an $n$-vertex graph~$G$ 
with a vertex for each set and an edge when one set is a subset of another.
A planar drawing of this graph gives a nice visualization of the subset relation.
Since the graph is not necessarily planar, a natural approach is to split~$G$ into a planar graph and then visualize the resulting graph, as illustrated in Figure~\ref{fig:motivation}(a). 
Let's now consider another interesting scenario where we want to visualize a graph~$G$ of a social network, see Figure~\ref{fig:motivation}(b). First, group the vertices of the graph into clusters by running a clustering algorithm. Now, consider the cluster graph: every cluster is a node and there is an edge between two cluster-nodes if there exists a pair of vertices in the corresponding clusters that are connected by an edge. In general, the cluster graph is non-planar, but we would like to draw the clusters in the plane. Thus, we may need to split a cluster into two or more sub-clusters. The resulting ``cluster map'' will be confusing if clusters are broken into too many disjoint pieces, which 
leads to 
the question of minimizing the planar split thickness.

\begin{figure}[t] \centering
\includegraphics[width=\textwidth,page=1]{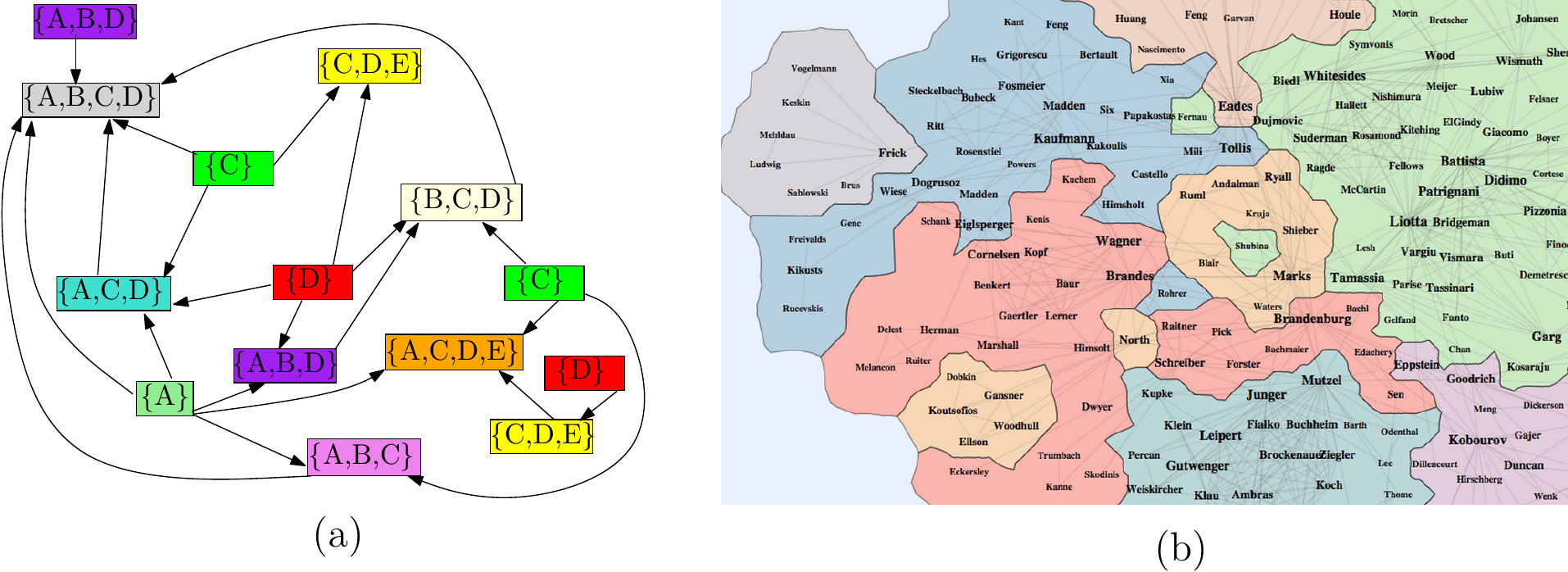} 
\caption{(a) A $2$-split visualization of subset relations among 10 sets.
 (b) Visualization of a social network. Note that the orange cluster has 3 sub-clusters, and the red cluster has 2 sub-clusters.}
\label{fig:motivation}
\end{figure}

\subsection{Related Work}
The problem of determining the planar split thickness of a graph~$G$ is 
 related to the graph thickness~\cite{Beineke65}, 
empire-map~\cite{hutchinson93}, $k$-splitting~\cite{liebers2001} and planar emulator~\cite{Chimani2013} problems. 
The \emph{thickness} of a graph~$G$ is the minimum integer~$t$ such that~$G$
admits an edge-partition into~$t$ planar subgraphs. One can assume that these 
planar subgraphs are obtained by applying a $t$-split operation at each 
vertex. Hence, thickness is an upper bound on the planar split thickness, 
e.g., the thickness and thus the planar split thickness  of graphs with 
treewidth~$\rho$ and maximum-degree-4 is at most
$\lceil \rho/2\rceil$~\cite{DujmovicW07} and 2~\cite{DuncanEK04}, 
respectively.  Analogously, the planar split thickness of a graph is bounded
by its \emph{arboricity}, that is, the minimum number of forests into which its 
edges can be partitioned. We will later show that both parameters also provide
an asymptotic lower bound on the planar split thickness.

A \emph{$k$-pire map} is a $k$-split planar graph, where an \emph{empire} consists of the copies of one original vertex (so each \emph{empire} consists of at most~$k$ vertices). 
 In 1890, Heawood~\cite{H90} proved that every~$12$ mutually adjacent empires can be drawn as a $2$-pire map where each empire has size exactly 2.
 Later, Ringel and Jackson~\cite{jr-shepp-JRAM84} showed that for every integer~$k\ge 2$ a set of~$6k$ mutually adjacent empires can be drawn as a $k$-pire map. This implies an upper bound of~$\lceil n/6 \rceil$ on the planar split thickness of a complete graph on~$n$ vertices.

A rich body of literature considers the planarization of non-planar graphs via
\emph{vertex splits}~\cite{FariaFM98,HartsfieldJR85,liebers2001,NetoSXSFF02}. 
Here a \emph{vertex split} is one of our 2-split operations. 
These results focus on minimizing the \emph{splitting number}, i.e., the 
total number of vertex splits to obtain a planar graph. Tight bounds on the splitting number are known 
for complete graphs~\cite{HartsfieldJR85} and complete bipartite 
graphs~\cite{JR84,JR85b}, but for general graphs, the problem of determining the 
splitting number of a graph is NP-hard~\cite{FariaFM98}. Note that upper 
bounding the splitting number does not necessarily guarantee any good upper 
bound on the planar split thickness, e.g., see Section~\ref{sec:cbp}.

Knauer and Ueckerdt~\cite{ku-3wcag-arXiv12} studied the \emph{folded covering number} 
 which is equivalent to our problem and stated several results for splitting 
graphs into a star forest, a caterpillar forest, or an interval graph. They showed that 
planar graphs are 4-splittable into a star forest, and planar bipartite graphs 
as well as outerplanar graphs are 3-splittable into a star forest. 
It follows from Scheinerman and West~\cite{sw-tinpg-JCTB83}
that planar graphs are 3-splittable into an interval graph and 4-splittable into
a caterpillar forest, while outerplanar graphs are 2-splittable into an interval
graph.

A \emph{planar emulator} is a  $k$-split planar graph with the additional property that for every original edge $(u,v)$ and every copy $v_i$ of vertex $v$  the $k$-split contains an edge $(v_i, u_j)$ for some copy $u_j$ of $u$.  (Planar split thickness requires this only for one copy of $v_i$.)
Not every graph has a planar emulator and it is an open problem to characterize those that do~\cite{Chimani2013}.   
It has been shown that the complete bipartite graph~$K_{3,5}$ and the graph $K_7-C_4$,
and thus the complete graph~$K_7$, have no finite
planar emulator~\cite{f-egg-phd85,h-ctgt-CM91}, although they are 2-splittable
(see Theorem~\ref{thm:complete} and Corollary~\ref{cor:bipartite}).
A \emph{planar cover} has the even stronger property that the copy $u_j$ is unique.  Negami conjectured in 1988~\cite{Negami} that a graph has a (finite) planar cover if and only if it embeds in the  projective plane.

\subsection{Our Contribution} 
In this paper, we examine the planar split thickness for 
non-planar graphs. 
Initially, we focus on splitting various graph classes into planar graphs,
namely complete graphs, complete bipartite graphs, 
graphs of bounded maximum degree, and graphs of 
(non\mbox{-}\nobreak)\nobreak\hspace{0pt}orientable
genus~1. 
We then prove that it is NP-hard to recognize graphs that 
are $2$-splittable into a planar graph, while we describe a technique for 
approximating the planar split thickness within a constant factor. 
Finally, for bounded treewidth graphs,  we present a technique to verify planar 
 $k$-splittability in linear time, for any constant $k$.

 Because our results are for planar $k$-splittability, we will drop the word ``planar'', and use ``$k$-splittable'' and ``$k$-split graph'' to mean ``planar
$k$-splittable'' and ``planar $k$-split graph'', respectively.
 The rest of the paper is organized as follows. In Section~\ref{sec:bp} we present results about complete and complete bipartite graphs. 
 In Section~\ref{sec:np} we prove the NP-hardness of recognizing 2-splittable graphs.  In Section~\ref{sec:coping} we present the results about approximation algorithms and fixed parameter tractability. Finally, in Section~\ref{sec:con} we summarize the results in the paper and consider directions for future research.
  
\section{Planar Split Thickness of Various Graph Classes} 
\label{sec:bp}

In this section, we focus on the planar split thickness of complete graphs,
complete bipartite graphs, graphs of bounded
maximum degree, and graphs of (non-)orientable genus~1.

\subsection{Complete Graphs}

Let $f(G)$ be the planar split thickness of the graph~$G$. Recall that  Ringel and Jackson~\cite{jr-shepp-JRAM84} showed that 
 $f(K_n)\le \lceil n/6 \rceil$ for every~$n\ge 12$. Since an $(n/6)$-split of an $n$-vertex graph contains at most~$n^2/2-6$ edges, 
 and the largest complete graph with at most~$n^2/2-6$ edges is~$K_n$, this bound is tight. Besides, for every~$n<12$,
 it is straightforward to construct a $2$-split graph of~$K_{n}$ by deleting~$2(12-n)$ vertices from the $2$-split graph of~$K_{12}$.
 Hence, we obtain the following theorem. 

\begin{theorem}[Ringel and Jackson~\cite{jr-shepp-JRAM84}]\label{thm:complete}
  If $n\le4$, then $f(K_n)=1$, and if $5\le n\le12$, then $f(K_n)=2$. 
  Otherwise, $f(K_n) = \lceil n/6 \rceil$.
\end{theorem}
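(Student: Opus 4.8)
The plan is to treat the three ranges of $n$ separately, using the known empire-map constructions of Heawood and of Ringel and Jackson for the upper bounds, a single Euler-formula edge count for the lower bound, and the fact that $f$ is monotone under taking subgraphs to glue everything together. First I would record two elementary facts. (i)~A $1$-split leaves a graph unchanged, so $f(G)=1$ if and only if $G$ is planar; this already gives $f(K_n)=1$ for $n\le 4$ and $f(K_n)\ge 2$ for all $n\ge 5$, since $K_5$ is non-planar. (ii)~If $H$ is a subgraph of $G$ then $f(H)\le f(G)$: given a planar $k$-split $P$ of $G$, delete all copies of vertices outside $V(H)$ and keep, for each edge of $H$, one edge of $P$ witnessing it (such an edge exists because $E(H)\subseteq E(G)$); the result is a planar $k$-split of $H$.

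For the lower bound with $n\ge 13$, suppose $K_n$ is $k$-splittable into a planar graph $P$. Then $|V(P)|\le kn$, and $|E(P)|\ge\binom n2$ because the map sending an edge of $P$ to the pair of original endpoints it joins is a surjection onto $E(K_n)$. Planarity gives $|E(P)|\le 3|V(P)|-6$, hence $\binom n2\le 3kn-6$, i.e.\ $k\ge\frac{n-1}{6}+\frac2n$. Since $6\lceil n/6\rceil\le n+5$ we have $\lceil n/6\rceil-1\le\frac{n-1}{6}<\frac{n-1}{6}+\frac2n\le k$, and as $k$ is an integer this forces $k\ge\lceil n/6\rceil$. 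Thus $f(K_n)\ge\lceil n/6\rceil$ in general, which is the claimed value once $n\ge 13$; for $5\le n\le 12$ it is dominated by the bound $f\ge 2$ from fact~(i).

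For the upper bounds, $f(K_n)=1$ for $n\le 4$ is immediate from (i). For $n\ge 13$, apply the theorem of Ringel and Jackson with $k=\lceil n/6\rceil$, which is $\ge 3\ge 2$: their $6k$ mutually adjacent empires form a planar $k$-split of $K_{6k}$, and since $n\le 6k$ and $f$ is monotone, $f(K_n)\le f(K_{6k})\le\lceil n/6\rceil$; together with the lower bound this settles $n\ge 13$. For $5\le n\le 12$, Heawood's $2$-pire map (equivalently, Ringel--Jackson with $k=2$) shows $f(K_{12})\le 2$, so monotonicity gives $f(K_n)\le f(K_{12})\le 2$, which with (i) yields $f(K_n)=2$.

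I do not expect a genuine obstacle here: the hard existence statements are exactly the cited empire-map results, so the work is bookkeeping — formulating the subgraph monotonicity of $f$ cleanly and noticing that the crude Euler bound already matches $\lceil n/6\rceil$ for $n\ge 13$. The one step deserving care is the inequality $|E(P)|\ge\binom n2$: the copies of a vertex need not be mutually adjacent in $P$, and a single edge of $K_n$ may be witnessed several times, so the correct formulation is surjectivity (not bijectivity) of the edge-projection map.
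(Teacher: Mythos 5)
Your proof is correct and follows essentially the same route as the paper: Euler's formula gives the edge-count lower bound, the Ringel--Jackson (and Heawood) empire-map constructions give the upper bound, and monotonicity under vertex deletion bridges the intermediate values of~$n$ (the paper phrases this last step as deleting $2(12-n)$ vertices from a $2$-split of $K_{12}$). Your write-up is more careful about the arithmetic (in particular the inequality chain $\lceil n/6\rceil - 1 \le (n-1)/6 < (n-1)/6 + 2/n \le k$ and the surjectivity, rather than bijectivity, of the edge-projection), but the substance matches the paper's terse two-sentence argument.
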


Let $K^2_{12}$ be any $2$-split graph of $K_{12}$. Then,~$K^2_{12}$ has a particular useful property, as stated in the following lemma.  

\begin{lemma}\label{non-adj}
Any planar embedding~$\Gamma$ of~$K^2_{12}$ is a triangulation, where each vertex of~$K_{12}$ is split exactly once
  and no two vertices that correspond to the same vertex in~$K_{12}$ are incident to the same face. 
\end{lemma}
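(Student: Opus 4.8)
The plan is to pin down the vertex and edge counts of $K^2_{12}$ exactly via one tight application of Euler's formula (recall that, by the paper's convention, $K^2_{12}$ is a planar graph, which we take to be simple as is standard for split graphs), and then to read off all three assertions.

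First I would record two easy facts about any $2$-split $K^2_{12}$ of $K_{12}$: it has at most $24$ vertices, since each of the $12$ vertices of $K_{12}$ is replaced by at most two copies; and it has no edge joining two copies of the same original vertex, since a split at a vertex $v$ only adds edges between copies of $v$ and (copies of) neighbours of $v$, never between two copies of $v$ itself, and $K_{12}$ has no loops. Next, for each of the $\binom{12}{2}=66$ edges $uv$ of $K_{12}$, the graph $K^2_{12}$ contains at least one edge between a copy of $u$ and a copy of $v$; since such an edge determines the pair $\{u,v\}$, the $66$ corresponding sets of edges are pairwise disjoint and nonempty, so $|E(K^2_{12})|\ge 66$. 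Combining this with the edge bound $|E|\le 3|V|-6\le 3\cdot 24-6=66$ for simple planar graphs forces equality everywhere: $K^2_{12}$ has exactly $24$ vertices and exactly $66$ edges. In particular every vertex of $K_{12}$ is replaced by exactly two copies, i.e.\ is split exactly once.

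The equality $|E|=3|V|-6$ with $|V|=24\ge 3$ means $K^2_{12}$ is maximal planar, so every planar embedding $\Gamma$ of it is a triangulation; and since a maximal planar graph on at least four vertices is $3$-connected, each face of $\Gamma$ is bounded by a simple triangle, i.e.\ by three pairwise distinct, pairwise adjacent vertices. Finally, I would argue by contradiction for the last claim: if two copies $v_1,v_2$ of the same vertex $v$ of $K_{12}$ were both incident to a face $F$ of $\Gamma$, then $v_1$ and $v_2$ would be two of the three vertices on the triangle bounding $F$, hence adjacent in $K^2_{12}$, contradicting the second fact recorded above. Therefore no two copies of the same vertex of $K_{12}$ lie on a common face, completing the proof.

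I do not expect a genuine obstacle here: the whole argument is driven by the numerical coincidence $3\cdot 24-6=\binom{12}{2}$, and the only point requiring a word of care is the (standard) simplicity of $K^2_{12}$, without which the bound $|E|\le 3|V|-6$ would not apply and the counting step would collapse.
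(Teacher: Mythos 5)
Your proof is correct and takes essentially the same route as the paper: bound the vertices above by $24$ and the edges below by $66$, apply Euler's inequality $|E|\le 3|V|-6$ to force both to be exact, conclude that $K^2_{12}$ is maximal planar hence a triangulation, and finish by noting that two copies of the same vertex are non-adjacent and so cannot both lie on a triangular face. Your explicit appeal to $3$-connectedness to guarantee that each face is a simple triangle is a small extra point of rigor that the paper glosses over, but the underlying argument is the same.
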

\begin{proof}
$K_{12}$ has $66$ edges. The $2$-split operation  
produces a graph with at most twice the number of vertices and at least the original number of edges,
 so any (planar) graph $K^2_{12}$ has 24 vertices and $66$ edges, 
  since that is the smallest number of vertices for a 66-edge planar graph by Euler's formula.
  Further, 66 edges is the largest number of edges for a 24-vertex planar graph by Euler's formula.
  Therefore, $K^2_{12}$ must be maximally planar, with all faces triangles.
  Two copies of the same vertex cannot be adjacent, so they cannot
  lie on the same boundary of a triangle face.
\end{proof}

Let~$H$ be the graph consisting of~2 copies of~$K_{12}$ attached at a common vertex~$v$. Then,~$H$ provides an example of a graph that is not 2-splittable even though its edge count does not preclude its possibility of being 2-splittable.

\begin{lemma}
  The graph $H$ is not $2$-splittable.
\end{lemma}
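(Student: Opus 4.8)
The plan is to argue by contradiction: suppose $H$ is $2$-splittable, so there is a planar graph $H^2$ obtained by a $2$-split of $H$. Write $H = K_{12}^{(1)} \cup K_{12}^{(2)}$, where the two copies of $K_{12}$ share only the vertex $v$. The key observation is that an edge-count argument forces each of the two copies of $K_{12}$ to be split into a copy of $K_{12}^2$ that is essentially forced to be a triangulation, and I want to exploit Lemma~\ref{non-adj} to derive a contradiction at the shared vertex $v$.

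First I would do the global edge count. The vertex $v$ is split into at most two vertices; every other vertex lies in exactly one of the two copies of $K_{12}$ and is split into at most two vertices. So $H^2$ has at most $2 + 2\cdot 22 + 2 = 48$ vertices (here $v$ plus $22$ private vertices of the two copies) and it must retain all $2\cdot 66 = 132$ edges of $H$. But a planar graph on $N$ vertices has at most $3N-6$ edges, and $3\cdot 48 - 6 = 138$; so far this does not immediately give a contradiction, which is exactly the point of the lemma statement (``its edge count does not preclude its possibility''). So I would instead localize the argument: consider the subgraph $H^2_i$ of $H^2$ induced by the copies of the vertices of $K_{12}^{(i)}$, for $i=1,2$. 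This is a planar graph that is a $2$-split of $K_{12}$, so by the reasoning in Lemma~\ref{non-adj} it has exactly $24$ vertices and exactly $66$ edges; in particular $v$ is split into exactly two vertices, say $v_1$ used by $H^2_1$ and $v_2$ used by $H^2_2$ — but wait, each $H^2_i$ individually needs both copies available, so more carefully: in $H^2$, $v$ is replaced by copies $v_1, v_2$, and within $H^2_i$ the neighbors of $v$ in $K_{12}^{(i)}$ must all attach to $v_1$ and/or $v_2$; the $24$-vertex count for the $2$-split of $K_{12}$ forces $H^2_i$ to use \emph{both} $v_1$ and $v_2$ (otherwise it would be a $2$-split of $K_{12}$ on $23$ vertices with $66$ edges, violating Euler). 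Hence $H^2_1$ and $H^2_2$ share exactly the two vertices $v_1, v_2$ and no edges, and each is a triangulation on $24$ vertices in any planar embedding.

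Now comes the heart of the argument. Fix a planar embedding of $H^2$. Its restriction gives planar embeddings of $H^2_1$ and $H^2_2$. By Lemma~\ref{non-adj}, in the embedding of $H^2_1$ every face is a triangle and $v_1, v_2$ do not share a face; likewise for $H^2_2$. Since $H^2_1$ is a triangulation and $v_1$ is not on the outer face's ``other side'' of $v_2$ — more precisely, since $H^2_1$ is $3$-connected (a triangulation on $\ge 4$ vertices), its embedding is unique up to reflection and choice of outer face, and $v_1$ and $v_2$ are separated by the rest of $H^2_1$ in the strong sense that every face incident to $v_1$ is a triangle none of whose vertices is $v_2$. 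The obstruction I am driving at: $H^2_2$ must be drawn inside one of the triangular faces of $H^2_1$ (the planar embedding of $H^2$ restricted to the complement of $H^2_1$ lives in a single face of $H^2_1$, since $H^2_2$ is connected and meets $H^2_1$ only in $\{v_1,v_2\}$). That face is a triangle $T$ with three vertices, and it can contain at most one of $v_1, v_2$ on its boundary — but $H^2_2$ is attached to $H^2_1$ at \emph{both} $v_1$ and $v_2$, so both $v_1$ and $v_2$ must lie on the boundary of the single face of $H^2_1$ that hosts $H^2_2$. That contradicts the ``no two copies of the same original vertex share a face'' conclusion of Lemma~\ref{non-adj}. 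Therefore no such $H^2$ exists.

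The main obstacle I anticipate is making the connectivity/face-hosting step fully rigorous: I need to be careful that $H^2$ is connected (it is, since $H$ is and splits preserve connectivity via the shared neighborhoods), that $H^2_2$ minus its shared vertices is nonempty and connected enough that it occupies a single face of $H^2_1$, and that $H^2_1$ really is a triangulation with the Lemma~\ref{non-adj} property even as an embedded subgraph of $H^2$ rather than in isolation — this last point follows because the vertex and edge counts of $H^2_1$ are forced regardless of the ambient graph, so its embedding inherited from $H^2$ is automatically a triangulation of the sphere on those $24$ vertices with one face removed to host the rest, and then the non-adjacency of same-copy vertices still applies to the closed-up triangulation. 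A secondary subtlety is ruling out the degenerate possibility that $v$ is split into only one vertex in $H^2$; but then that single vertex $v_1$ would have to serve all $22$ neighbors of $v$, and the induced $2$-split of $K_{12}^{(i)}$ would have only $23$ vertices and still $66$ edges, impossible by Euler, so this case never arises.
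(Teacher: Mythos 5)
Your proof is correct and takes essentially the same route as the paper: use Lemma~\ref{non-adj} to conclude that the two copies $v_1,v_2$ of $v$ do not share a face in the induced triangulation of one $K_{12}$, then observe that the rest of the other copy's $2$-split must occupy a single face of that triangulation while being attached to both $v_1$ and $v_2$, which is impossible. You fill in a couple of steps the paper leaves implicit (3-connectivity of a triangulation to justify that $H^2_2\setminus\{v_1,v_2\}$ lies in one face, and ruling out $v$ being split only once), but the core idea and the reliance on Lemma~\ref{non-adj} are the same.
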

\begin{proof}  
 Consider a 2-split graph~$H'$ of one copy of~$K_{12}$. By Lemma~\ref{non-adj}, 
  the vertices~$v_1$ and~$v_2$ in~$H'$ that correspond to the same vertex 
  in~$K_{12}$ are not incident to the same face.  Since~$v$ can be split only 
  once,  the 2-split graph~$H''$ of the other copy of~$K_{12}$ must lie inside 
  some face that is incident to either~$v_1$ or~$v_2$.  Without loss of 
  generality, assume that it is inside some face incident to~$v_1$. Note that 
  both~$H'$ and~$H''$ need a copy of~$v$ in some face which is not incident 
  to~$v_1$. Since both~$H'$ and~$H''$ are triangulations, this would introduce a 
  crossing in any 2-split graph of~$H$.
\end{proof}

\subsection{Complete Bipartite Graphs} 
\label{sec:cbp}

Hartsfield et al.~\cite{HartsfieldJR85} showed that the splitting number of~$K_{m,n}$, where $m,n\ge 2$, is exactly~$\lceil (m-2)(n-2)/2\rceil$. However, their construction does not guarantee tight bounds on the splitting thickness of complete bipartite graphs. For example, if~$m$ is an even number, then their construction does not duplicate any vertex of the set~$A$ with~$m$ vertices, but uses $n+(m/2-1)(n-2)$ vertices to represent the set~$B$ of~$n$ vertices. Therefore, at least one vertex in the set~$B$ is duplicated at least $(n+(m/2-1)(n-2))/n= m/2-m/n+2/n\ge3$ times, for~$m\ge 6$ and~$n\ge 5$. On the other hand, we show that~$K_{m,n}$ is 2-splittable in some of these cases, as stated in the following theorem.  

\begin{figure}[pt] \centering
\subfigure[]{\includegraphics[width=.75\columnwidth]{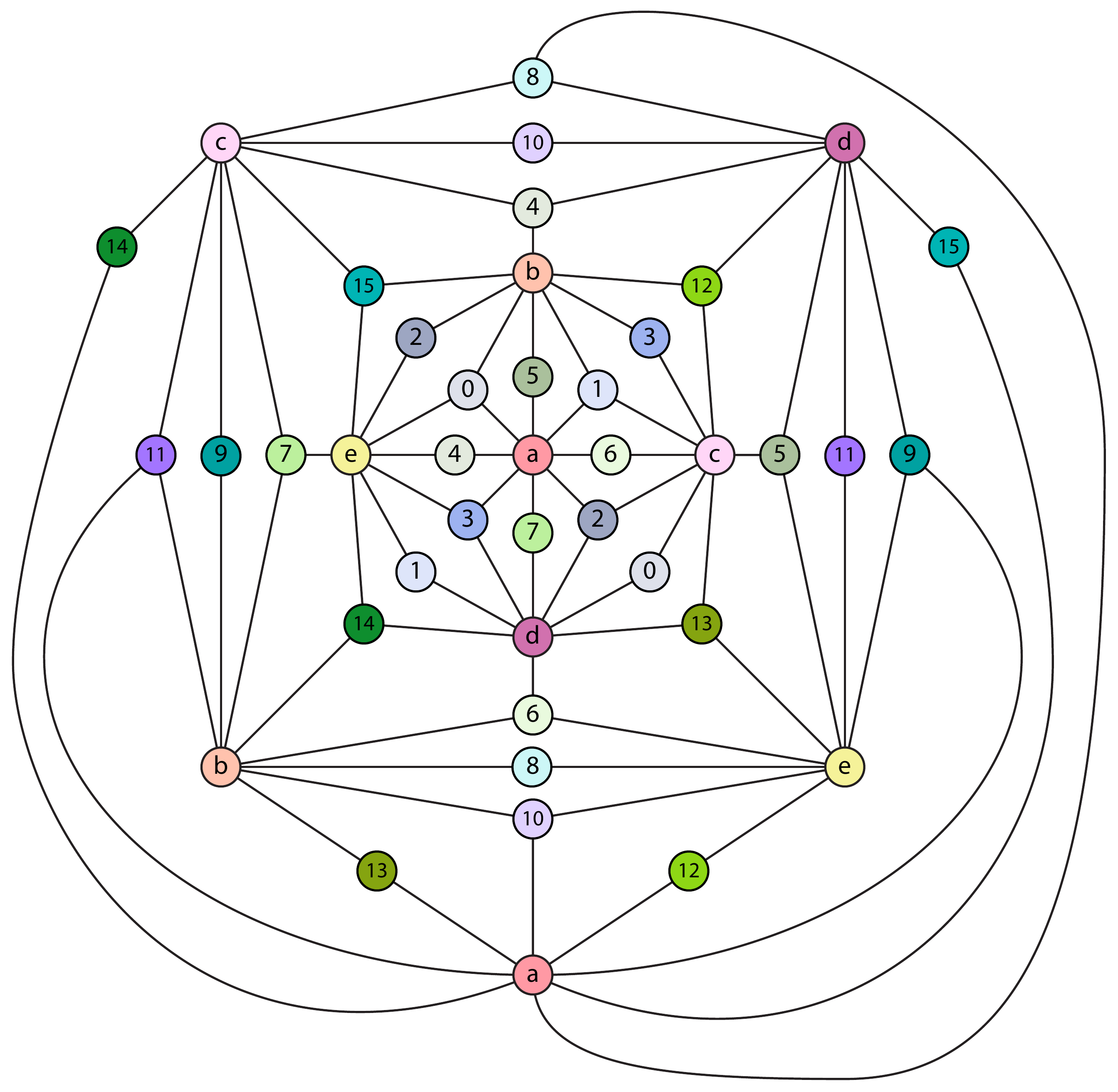}}\\
\bigskip
\subfigure[]{\includegraphics[width=.45\columnwidth]{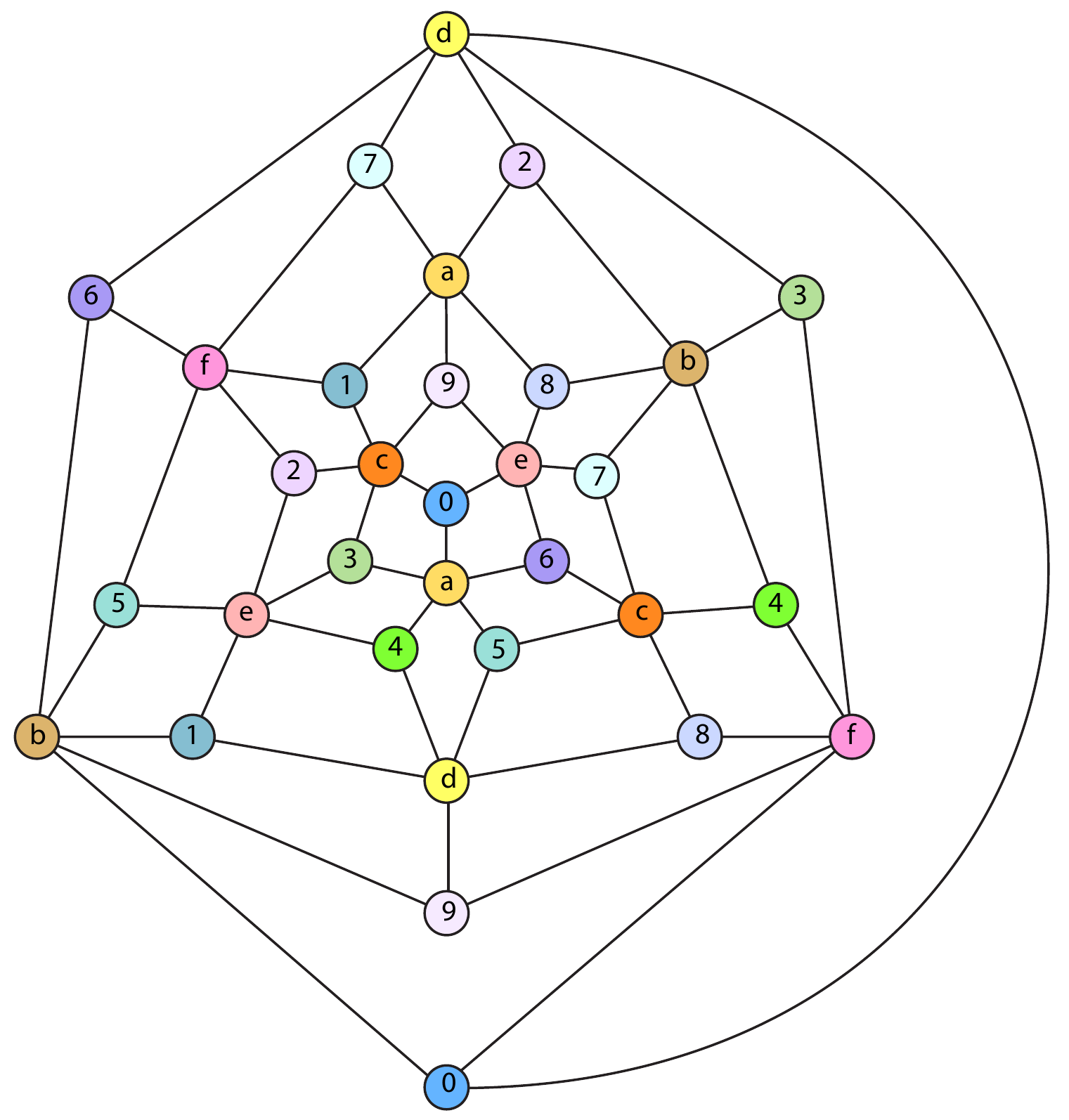}}\hfill
\subfigure[]{\includegraphics[width=.45\columnwidth]{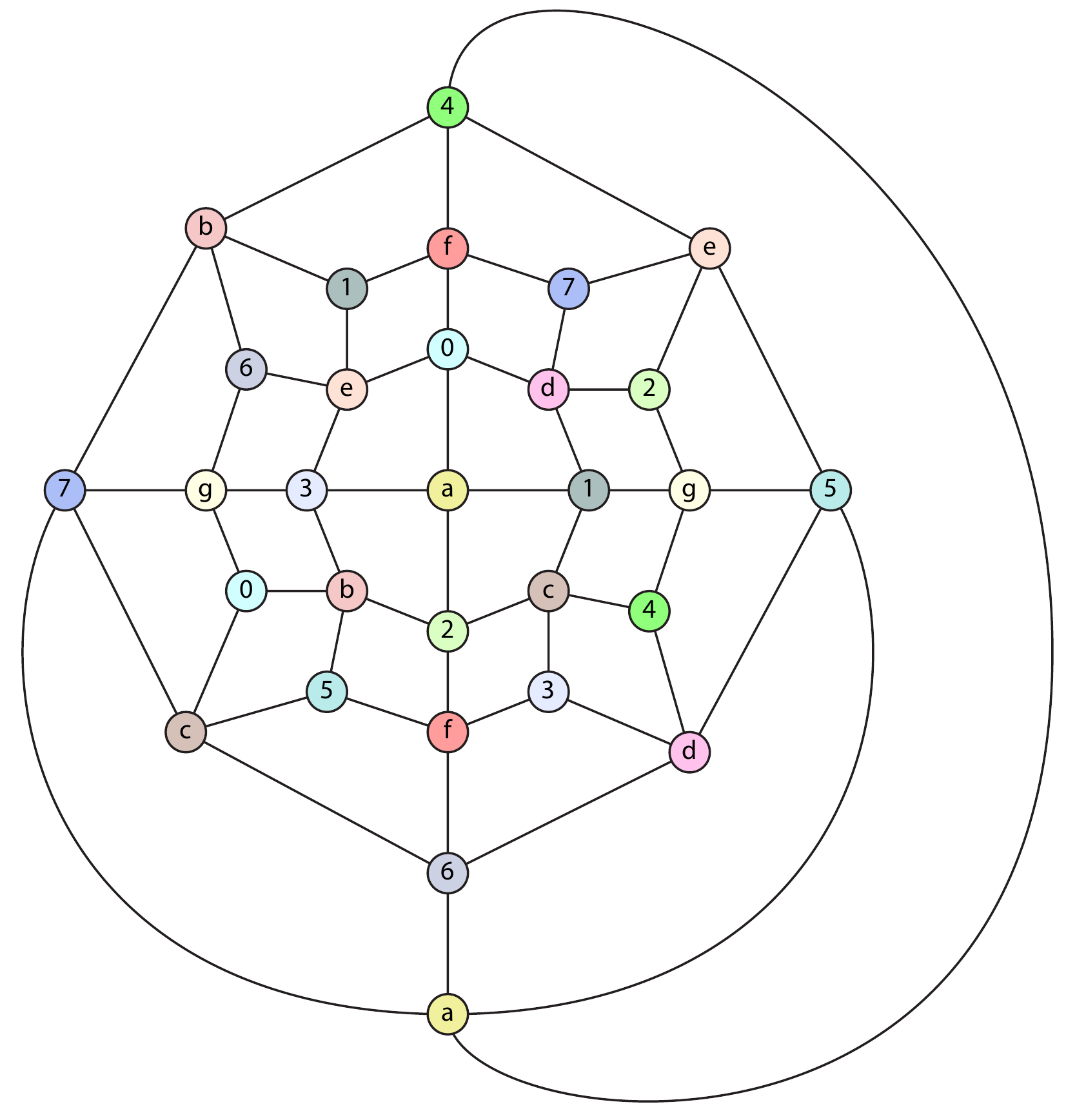}}
\caption{The $2$-split graphs of (a) $K_{5,16}$, (b) $K_{6,10}$, and (c) $K_{7,8}$. 
  }
\label{fig:bp}
\end{figure}

\begin{lemma}\label{lem:2bipartite}
The graphs~$K_{5,16}$, $K_{6,10}$, and $K_{7,8}$ are 2-splittable, and their
2-split graphs are quadrangulations, 
which implies that  for complete bipartite graphs $K_{m,n}$, where $m=5,6,7$, those are the largest graphs with planar split thickness 2. 
\end{lemma}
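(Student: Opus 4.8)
The plan is to separate the statement into two essentially independent parts: the \emph{existence} of the claimed planar $2$-splits, which is witnessed by the explicit constructions in Figure~\ref{fig:bp}, and the assertions that any such $2$-split must be a quadrangulation and that $K_{5,16}$, $K_{6,10}$, $K_{7,8}$ are the largest $2$-splittable graphs in their families~--- both of which fall out of Euler-formula edge counting.

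For the counting part I would first record two elementary facts. A $2$-split of a bipartite graph is again bipartite, since the split operation never joins two copies of a vertex and each new copy inherits the side of its original; and assigning to each original edge $(u,v)$ one of the edges $(u_i,v_j)$ witnessing it defines an injection into the edge set of the $2$-split, because the copy sets of distinct vertices are disjoint. Hence a planar $2$-split $G'$ of $K_{m,n}$ is bipartite, has at most $2(m+n)$ vertices, and has at least $mn$ edges, so, using that a bipartite planar graph on $N\ge3$ vertices has at most $2N-4$ edges with equality only for quadrangulations,
\[ mn \;\le\; |E(G')| \;\le\; 2|V(G')|-4 \;\le\; 4(m+n)-4. \]
For $(m,n)\in\{(5,16),(6,10),(7,8)\}$ one checks $mn = 4(m+n)-4$ (the two sides equal $80$, $60$, $56$, respectively), so every inequality is tight: $G'$ has exactly $2(m+n)$ vertices and $mn$ edges and is a quadrangulation. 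For $(m,n)\in\{(5,17),(6,11),(7,9)\}$ instead $mn>4(m+n)-4$ (namely $85>84$, $66>64$, $63>60$), so these $K_{m,n}$ are not $2$-splittable; since every subgraph of a $2$-splittable graph is $2$-splittable, neither is $K_{m,n'}$ for any $n'\ge n$. Finally, each of $K_{5,16},K_{6,10},K_{7,8}$ contains $K_{3,3}$ and is therefore non-planar, so each has planar split thickness exactly $2$, which yields the ``largest graphs'' claim.

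The remaining, positive direction is to exhibit for each of the three graphs a planar $2$-split; these are the drawings of Figure~\ref{fig:bp}. I would accompany each by an explicit description~--- which vertices are split (by the count above, all $m+n$ of them) and the cyclic pattern in which the $2m$ and $2n$ copies are joined~--- so that a reader can verify that (i)~for every pair of original vertices on opposite sides, some pair of their copies is adjacent; (ii)~the drawing is plane; and (iii)~it has $2(m+n)$ vertices and $mn$ edges, equivalently that every face is a quadrilateral. The main obstacle is precisely finding these configurations: the required quadrangulation has no slack at all, so one is forced to split essentially every vertex and to realize the full ``copy-covering'' pattern inside a plane bipartite quadrangulation on $2(m+n)$ vertices; the edge count is exactly what then certifies that any such drawing is optimal.
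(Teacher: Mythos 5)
Your proof is correct and follows the same Euler-formula edge-counting skeleton as the paper: existence is read off the explicit drawings in Figure~\ref{fig:bp}, and the chain $mn\le |E(G')|\le 2|V(G')|-4\le 4(m+n)-4$ (with equality forcing all faces to be quadrilaterals) is exactly the paper's argument that the $2$-split graphs are quadrangulations. The one place you diverge is the maximality claim: the paper argues locally that since every face of the quadrangulation contains only two vertices from each side, a new vertex of degree $\ge 5$ (and hence a split copy of degree $\ge 3$) cannot be inserted planarly; you instead observe directly that $K_{5,17}$, $K_{6,11}$, $K_{7,9}$ already violate the edge bound ($85>84$, $66>64$, $63>60$) and invoke monotonicity under subgraphs. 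Your version is a touch cleaner since it avoids the implicit step of restricting a hypothetical $2$-split of $K_{m,n+1}$ to one of $K_{m,n}$, and your added remark that each graph contains $K_{3,3}$ (so the split thickness is exactly $2$, not $1$) closes a small detail the paper leaves tacit; both routes are sound and elementary.
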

\begin{proof}
The sufficiency can be observed from the $2$-split construction of $K_{5,16}$, $K_{6,10}$, and $K_{7,8}$, as shown in Figure~\ref{fig:bp}.  A planar bipartite graph can have at most  $2n-4$ edges~\cite{HartsfieldJR85}. Since the graphs $K_{5,16},K_{6,10}$ and $K_{7,8}$ contain exactly $4(m+n)-4$ edges, their 2-split graphs are quadrangulations, which in turn implies that the result is tight:
any vertex that we add to one of the partitions has degree at least~5, so we
have to add a vertex of degree at least~3 to the 2-split graph, which cannot
be done planarly since each quadrangular face contains only~2 vertices of
each partition.
\end{proof}

With this Lemma, we can fully characterize the 2-splittable complete
bipartite graphs.

\begin{theorem}\label{thm:bipartite}
  Any complete bipartite graph $K_{m,n}$ is 2-splittable if and only if $nm\le 4(n+m)-4$.
\end{theorem}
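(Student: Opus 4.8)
The plan is to prove the two implications separately. For the (easy) necessity direction I would use an edge count: any $2$-split $G'$ of $K_{m,n}$ is still bipartite, since every edge of $G'$ joins a copy of a vertex on the $m$-side to a copy of a vertex on the $n$-side; it has at most $2(m+n)$ vertices because each vertex is replaced by at most two; and it has at least $mn$ edges, since each of the $mn$ original edges contributes at least one edge of $G'$ and edges arising from distinct original edges are distinct. If $G'$ is planar, the bound $|E|\le 2|V|-4$ for simple bipartite planar graphs then gives $mn \le 4(m+n)-4$, the few cases with $|V|<3$ being trivial.

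For the sufficiency direction I would first record that $2$-splittability is monotone under taking subgraphs: given a planar $2$-split of a graph $G$ and a subgraph $H\subseteq G$, deleting all copies of the vertices in $V(G)\setminus V(H)$ and all edges not coming from edges of $H$ leaves a planar $2$-split of $H$. Hence it suffices to show that every admissible $K_{m,n}$ is a subgraph of some $2$-splittable complete bipartite graph. Assuming without loss of generality $m\le n$ and rewriting $mn\le 4(m+n)-4$ as $n(m-4)\le 4(m-1)$, one reads off that the admissible pairs are exactly: $m\le 4$ with $n$ arbitrary; $m=5$ with $n\le 16$; $m=6$ with $n\le 10$; and $m=7$ with $n\le 8$. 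Consequently every admissible $K_{m,n}$ is contained in one of $K_{4,n}$, $K_{5,16}$, $K_{6,10}$, or $K_{7,8}$.

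The three sporadic graphs $K_{5,16}$, $K_{6,10}$, $K_{7,8}$ are $2$-splittable by Lemma~\ref{lem:2bipartite}, so only the family $K_{4,n}$ remains, and for it a direct construction works: split each vertex $b_j$ of the $n$-side into two copies, joining one to $a_1,a_2$ and the other to $a_3,a_4$, and leave $a_1,\dots,a_4$ unsplit; this is a valid $2$-split, and the resulting graph is a disjoint union of two copies of $K_{2,n}$, hence planar. I do not expect a genuine obstacle here, since the hard constructive work is already contained in Lemma~\ref{lem:2bipartite}; the only point requiring care is the arithmetic classifying the admissible pairs and confirming that the single infinite family $K_{4,n}$ together with the three extremal graphs really exhausts all of them.
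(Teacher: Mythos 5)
Your proof is correct, and the sufficiency direction is essentially the paper's: you classify the pairs $(m,n)$ satisfying the inequality, observe that each admissible $K_{m,n}$ sits inside $K_{4,n}$, $K_{5,16}$, $K_{6,10}$, or $K_{7,8}$, invoke Lemma~\ref{lem:2bipartite} for the three sporadic graphs, and decompose $K_{4,n}$ into two copies of $K_{2,n}$. (You are a bit more careful than the paper in explicitly stating and justifying that $2$-splittability is monotone under subgraphs, which the paper tacitly assumes.) The necessity direction is where you take a slightly different, cleaner route: you give a self-contained edge count (a planar $2$-split of $K_{m,n}$ is bipartite, has at most $2(m+n)$ vertices and at least $mn$ edges, so bipartite planarity forces $mn\le 4(m+n)-4$), which is exactly the $k=2$ instance of the paper's general bound~\eqref{eq2}. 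The paper instead back-references the quadrangulation property established in Lemma~\ref{lem:2bipartite} and argues one cannot add further vertices to $K_{7,8}$; both arguments are sound, but yours is more direct and does not depend on the structural claims about the extremal drawings.
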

\begin{proof}
  Without loss of generality, assume that $m\le n$. 
  
  If $m\le4$, then $4n\le 4(4+n)-4=4n+12$ is always satisfied.
  
  If $m=5$, then $5n\le 4(5+n)-4=4n+16\Leftrightarrow n\le 16$.
  
  If $m=6$, then $6n\le 4(6+n)-4=4n+20\Leftrightarrow n\le 10$.
  
  If $m=7$, then $7n\le 4(7+n)-4=4n+24\Leftrightarrow n\le 8$.
  
  If $m\ge 8$, then there is no $n>m$ that satisfies the inequality.
  
  Hence, the inequality is fulfilled exactly for the complete bipartite
  graphs that are a subgraph of $K_{4,n}$, $K_{5,16}$,
  $K_{6,10}$, or~$K_{7,8}$. The latter three graphs  are 2-splittable by 
  Lemma~\ref{lem:2bipartite}; for~$K_{4,n}$, we
  simply partition the graph into two copies of $K_{2,n}$, which are already 
  planar. Hence, the inequality is sufficient. On the other hand,
  Lemma~\ref{lem:2bipartite} also shows these four graphs are the largest graphs
  with planar split thickness 2 for $m\le 7$. In the proof of Lemma~\ref{lem:2bipartite},
  we showed that we cannot add a vertex to any of the bipartitions of~$K_{7,8}$; hence, there
  is no 2-splittable complete bipartite graph for $8\le m\le n$.
  This completes the proof.
  \end{proof}

This theorem can also be stated as follows.

\begin{corollary}\label{cor:bipartite}
  Any complete bipartite graph $K_{m,n}$ is 2-splittable if and only if
  it is a subgraph of $K_{4,n}$, $K_{5,16}$, $K_{6,10}$, or $K_{7,8}$.
\end{corollary}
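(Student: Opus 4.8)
The plan is to derive Corollary~\ref{cor:bipartite} directly from Theorem~\ref{thm:bipartite} by showing that the arithmetic condition $nm \le 4(n+m)-4$ is exactly the condition ``$K_{m,n}$ is a subgraph of one of the four listed graphs.'' Since the theorem is already proved, this is purely a reformulation, and the entire content is a case analysis on $\min(m,n)$.

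First I would assume without loss of generality that $m \le n$, and observe that $K_{m,n}$ is a subgraph of $K_{m',n'}$ precisely when $m \le m'$ and $n \le n'$ (after possibly swapping the two sides). So the family of subgraphs of $K_{4,n}$ (for arbitrary $n$) is exactly $\{K_{m,n} : \min(m,n) \le 4\}$; the subgraphs of $K_{5,16}$ are those with the smaller side equal to $5$ and the larger side at most $16$; similarly for $K_{6,10}$ and $K_{7,8}$. Being a subgraph of at least one of the four listed graphs thus amounts to: either $\min(m,n)\le 4$, or $\min(m,n)=5$ and $\max(m,n)\le 16$, or $\min(m,n)=6$ and $\max(m,n)\le 10$, or $\min(m,n)=7$ and $\max(m,n)\le 8$.

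Next I would match this up with the inequality, reusing the case distinction already carried out in the proof of Theorem~\ref{thm:bipartite}: for $m\le 4$ the inequality always holds; for $m=5$ it is equivalent to $n\le 16$; for $m=6$, to $n\le 10$; for $m=7$, to $n\le 8$; and for $m\ge 8$ it fails for every $n\ge m$. Reading off these equivalences, the set of pairs $(m,n)$ with $m\le n$ satisfying $nm\le 4(n+m)-4$ coincides exactly with the list in the previous paragraph, so ``satisfies the inequality'' $\Leftrightarrow$ ``is a subgraph of $K_{4,n}$, $K_{5,16}$, $K_{6,10}$, or $K_{7,8}$.'' Combining this equivalence with Theorem~\ref{thm:bipartite} gives the corollary.

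There is essentially no obstacle here; the only thing to be careful about is the bookkeeping when translating ``subgraph'' into inequalities on the two side sizes (in particular that $K_{m,n}\subseteq K_{4,n}$ for \emph{every} $n$ once $m\le 4$, so the $K_{4,n}$ case genuinely covers an infinite family), and making sure the four listed graphs are pairwise incomparable so that the union is described without redundancy. If one wanted, one could also phrase the whole thing as: the inequality cuts out a ``staircase'' region in the $(m,n)$-plane whose maximal elements under the componentwise order are exactly $K_{5,16}$, $K_{6,10}$, $K_{7,8}$, together with the unbounded family $K_{4,n}$, and a graph lies in the region iff it is dominated by one of these maximal elements.
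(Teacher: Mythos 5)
Your proposal is correct and matches the paper's (implicit) reasoning: the paper presents Corollary~\ref{cor:bipartite} as a direct restatement of Theorem~\ref{thm:bipartite}, and the case analysis you carry out ($m\le 4$ always, $m=5$ iff $n\le 16$, $m=6$ iff $n\le 10$, $m=7$ iff $n\le 8$, $m\ge 8$ never) is precisely the one already appearing in the proof of that theorem. The translation from the inequality to the subgraph condition is exactly the intended bookkeeping, so there is nothing to add.
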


In the following, we give some necessary conditions for $k$-splittable complete 
bipartite graphs based on the edge count argument.
Note that any  $k$-split graph $K_{m,n}^k$ of~$K_{m,n}$ must be a planar bipartite graph. Therefore,
if  $p$ and $q$ are the number of vertices and edges in $K_{m,n}^k$, respectively, then the inequality $q \le 2p-4$ holds.  
Consider a complete, $d$-vertex bipartite graph $K_{m,d-m}$ (with $m\le d/2$) that is $k$-splittable. 
The number of edges in this graph is $m \times (d-m)$.
  Since any $k$-split graph of $K_{m,d-m}$ can have at most~$kd$ vertices,  we have 
\begin{equation} m(d-m) \leq 2kd-4 \Leftrightarrow m^2-md+2kd-4 \geq 0 \label{eq2}
\end{equation} 
With Equation~\eqref{eq2}, we can prove the following propositions.

\begin{proposition} \label{prop:bp}
If $m\ge d/2$, $d \geq 4k+4 \sqrt{k^2-1}$ and  $m >\frac{ d-\sqrt{d^2-8kd+16}}{2}$, then $K_{m,d-m}$ is not $k$-splittable.
\end{proposition}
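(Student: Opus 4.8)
The plan is to prove the contrapositive of the necessary condition already recorded as Equation~\eqref{eq2}: if $K_{m,d-m}$ with $m\ge d/2$ is $k$-splittable, then $m^2-md+2kd-4\ge 0$, since a planar $k$-split of $K_{m,d-m}$ is bipartite, lives on at most $kd$ vertices, and carries at least the $m(d-m)$ original edges, while a planar bipartite graph on $p$ vertices has at most $2p-4$ edges. So it suffices to show that the three hypotheses together force the strict inequality $m^2-md+2kd-4<0$; then no planar $k$-split can exist.

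I would read $\varphi(m):=m^2-md+2kd-4$ as a quadratic in $m$ with positive leading coefficient, whose discriminant is $d^2-4(2kd-4)=d^2-8kd+16$. To see that this discriminant is nonnegative, regard $d^2-8kd+16$ in turn as a quadratic in $d$: its roots are $4k\pm 4\sqrt{k^2-1}$, so it is nonnegative exactly when $d\le 4k-4\sqrt{k^2-1}$ or $d\ge 4k+4\sqrt{k^2-1}$, and the second hypothesis is precisely the latter case. Hence $\varphi$ has real roots $m_\pm=\tfrac12\bigl(d\pm\sqrt{d^2-8kd+16}\bigr)$, which satisfy $m_-+m_+=d$, and $\varphi(m)<0$ holds exactly for $m$ in the open interval $(m_-,m_+)$.

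It remains to check that $m$ lies strictly inside $(m_-,m_+)$. The lower bound $m>m_-=\tfrac12\bigl(d-\sqrt{d^2-8kd+16}\bigr)$ is exactly the third hypothesis (and is mild once $m\ge d/2$, since $m_-<d/2$ whenever $d>4k+4\sqrt{k^2-1}$). For the upper bound, using $m_+=d-m_-$ turns the claim $m<m_+$ into $d-m>m_-$, i.e.\ into the assertion that the smaller side of the bipartition is not too small; establishing this from the stated bounds on $m$ and $d$ is the crux of the algebra, and is the step I expect to be the main obstacle. Once $m\in(m_-,m_+)$ is secured, $m(d-m)>2kd-4$ contradicts Equation~\eqref{eq2}, so $K_{m,d-m}$ is not $k$-splittable. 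Two further bookkeeping points to treat with care are the degenerate boundary $d=4k+4\sqrt{k^2-1}$, where the discriminant vanishes and $m_-=m_+=d/2$ so the argument yields nothing (the interesting regime being $d>4k+4\sqrt{k^2-1}$), and the consistent handling of strict versus non-strict inequalities throughout.
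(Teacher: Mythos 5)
Your approach is exactly the paper's: invoke the necessary edge count of Equation~\eqref{eq2}, factor the quadratic in $m$, and conclude that $m$ must avoid the open interval between the two roots $m_\pm=\tfrac12\bigl(d\pm\sqrt{d^2-8kd+16}\bigr)$. Your suspicion about the upper bound is correct and worth taking seriously: the stated hypotheses do \emph{not} imply $m<m_+$, and in fact the proposition as written is false. Take $k=2$, $d=16$, $m=10$: then $d=16>8+4\sqrt{3}$, $m\ge d/2=8$, and $m_-=6<m$, so all three hypotheses hold, yet $K_{10,6}=K_{6,10}$ is $2$-splittable by Lemma~\ref{lem:2bipartite}.

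The intended hypothesis, matching the convention set in the sentence introducing Equation~\eqref{eq2}, is $m\le d/2$ rather than $m\ge d/2$. Under that correction the upper bound is immediate, since $m\le d/2<m_+$ whenever the discriminant is strictly positive, and the boundary case $d=4k+4\sqrt{k^2-1}$ is vacuous because $m_-=d/2$ then forces $m>d/2$, contradicting $m\le d/2$. So the ``main obstacle'' you flagged is real, but it is a sign error in the hypothesis rather than an algebraic step waiting to be filled in. For comparison, the paper's own proof is terser still: it factors the quadratic, notes the roots are real when $d\ge4k+4\sqrt{k^2-1}$, and stops at ``we need $m\le m_-$ or $m\ge m_+$'', leaving the upper bound entirely implicit.
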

\begin{proof}

 The factorization of the Equation~\eqref{eq2} gives
\[ m^2-md+2kd-4=\left (m- \frac{d-\sqrt{d^2-8kd+16}}{2} \right) \left( m- \frac{d+\sqrt{d^2-8kd+16}}{2} \right),\]
and these constants are real numbers when $d \geq 4k+4 \sqrt{k^2-1}$. Therefore, for the equation to hold we need to have
$m \le  ({d-\sqrt{d^2-8kd+16}})/{2}$ or $m \ge ({d+\sqrt{d^2-8kd+16}})/{2}$.  
\end{proof}

\begin{proposition}\label{prop:bipart1}
If $k <  ({mn+4})/({2m+2n}) $, then $K_{m,n}$ is not $k$-splittable. 
\end{proposition}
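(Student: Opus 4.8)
The plan is to derive the claim directly from the edge-count inequality~\eqref{eq2} by taking its contrapositive. First I would observe that any $k$-split graph $K_{m,n}^k$ of $K_{m,n}$ is again bipartite: the copies of a vertex lying in one part of the bipartition inherit all their neighbors from that vertex, and those neighbors lie in the other part, so the split preserves the bipartition. Hence $K_{m,n}^k$ is a \emph{planar bipartite} graph. Next I would recall two elementary facts about the $k$-split operation: (i) it never decreases the number of edges, since every neighbor of a split vertex remains adjacent to at least one of its copies, so $K_{m,n}^k$ has at least $mn$ edges; and (ii) it creates at most $k$ copies of each vertex, so $K_{m,n}^k$ has at most $k(m+n)$ vertices.

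Now suppose, for the sake of contradiction, that $K_{m,n}$ is $k$-splittable, and let $p$ and $q$ denote the numbers of vertices and edges of a planar $k$-split $K_{m,n}^k$. Since this graph is planar and bipartite (and has at least three vertices), Euler's formula gives $q \le 2p - 4$. Combining this with facts (i) and (ii) yields $mn \le q \le 2p - 4 \le 2k(m+n) - 4$, which is precisely Equation~\eqref{eq2} with $d = m+n$. Rearranging gives $k \ge (mn+4)/(2m+2n)$, contradicting the hypothesis $k < (mn+4)/(2m+2n)$. Therefore $K_{m,n}$ is not $k$-splittable.

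There is no genuinely hard step here; the argument is a one-line rearrangement of~\eqref{eq2}. The only points that warrant a word of care are verifying that splitting preserves bipartiteness (so that the stronger planar-bipartite bound $q \le 2p-4$, rather than $q \le 3p-6$, applies) and that the edge count cannot drop below $mn$ under splitting. Both are immediate from the definition of the $k$-split operation, so the main effort is simply to state the chain of inequalities cleanly.
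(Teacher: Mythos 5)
Your proposal is correct and is essentially the paper's own argument: the paper derives Proposition~\ref{prop:bipart1} by plugging $d=m+n$ into Equation~\eqref{eq2}, and that equation is obtained exactly from the three facts you list (a $k$-split of $K_{m,n}$ is planar bipartite so $q\le 2p-4$, it has at least $mn$ edges, and at most $k(m+n)$ vertices). You have simply unfolded the derivation of Equation~\eqref{eq2} rather than citing it directly.
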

\begin{pf}
 Equation~\eqref{eq2} for $d=m+n$ gives 
\begin{equation*}k \geq \frac{-m^2+md+4}{2d}=\frac{mn+4}{2m+2n}.\tag*{\qed}\end{equation*}
\end{pf}
 
\begin{proposition}
$K_{n,n}$ is not  $\lfloor n/4\rfloor$-splittable.
\end{proposition}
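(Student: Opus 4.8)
The plan is to obtain this as an immediate corollary of Proposition~\ref{prop:bipart1}, which has already distilled the relevant edge-counting argument. First I would specialize that proposition to the balanced case $m=n$. The threshold appearing there, $(mn+4)/(2m+2n)$, becomes $(n^2+4)/(4n)=n/4+1/n$. So Proposition~\ref{prop:bipart1} says precisely that $K_{n,n}$ is not $k$-splittable whenever $k<n/4+1/n$.

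Second, I would check that $k=\lfloor n/4\rfloor$ satisfies this hypothesis. Since $\lfloor n/4\rfloor\le n/4<n/4+1/n$ for every positive integer $n$, the condition $k<(mn+4)/(2m+2n)$ holds with $m=n$ and $k=\lfloor n/4\rfloor$. Applying the proposition then yields that $K_{n,n}$ is not $\lfloor n/4\rfloor$-splittable, which is exactly the claim.

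If a self-contained argument is preferred over citing the earlier proposition, the same conclusion follows directly from Euler's formula: a planar bipartite graph on $p$ vertices has at most $2p-4$ edges; a $\lfloor n/4\rfloor$-split of $K_{n,n}$ has at most $2n\lfloor n/4\rfloor$ vertices and still contains all $n^2$ edges; and $n^2>2(2n\lfloor n/4\rfloor)-4$ because $4n\lfloor n/4\rfloor\le n^2$. There is essentially no obstacle here; the only subtlety worth flagging is the strictness of the inequality, so that one gets a genuine violation of the planarity bound rather than a borderline equality. This is exactly what the $\lfloor\cdot\rfloor$ slack, or equivalently the additive $+4$ coming from Euler's formula, provides.
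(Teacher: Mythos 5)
Your proof is correct, and it uses the same underlying edge-counting argument (Equation~\eqref{eq2}, i.e.\ Euler's bound $q\le 2p-4$ for planar bipartite graphs) as the paper. The difference is one of routing: the paper plugs $d=2n$, $k=\lfloor n/4\rfloor$ directly into Equation~\eqref{eq2} and factors the resulting quadratic as $-(n-2k-2\sqrt{k^2-1})(n-2k+2\sqrt{k^2-1})$, then checks that $n\ge 4k>2k+2\sqrt{k^2-1}$; you instead specialize the already-established Proposition~\ref{prop:bipart1} to $m=n$, getting the threshold $(n^2+4)/(4n)=n/4+1/n$, and observe $\lfloor n/4\rfloor\le n/4<n/4+1/n$. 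Your route is a genuine simplification: it avoids the square-root factorization entirely and the (slightly delicate) reality condition $k\ge 1$ that the paper's factorization implicitly needs. Your self-contained fallback, using $4n\lfloor n/4\rfloor\le n^2$ against the strict budget $4n\lfloor n/4\rfloor-4$, is also correct and makes the strictness explicit. Both arguments are fine; yours is shorter and cleaner.
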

\begin{proof}
  To verify this, observe that 
  $K_{n,n}$ has $d=2n$ vertices and, for $k=\lfloor n/4\rfloor$, Equation~\eqref{eq2} gives  
  \begin{dmath*}
    n^2-n(2n)+2k(2n)-4=-n^2+4kn-4=-(n-2k-2 \sqrt{k^2-1})(n-2k+2 \sqrt{k^2-1})\geq 0.
  \end{dmath*}
  This constraint does not hold when $ n >2k+2 \sqrt{k^2-1}$. Furthermore, $n\ge 4\lfloor n/4\rfloor=4k>2k+2 \sqrt{k^2-1}$,
   which completes the proof.
\end{proof}

\begin{proposition}
$K_{2k+1, 4k^2+2k-3}$ is not $k$-splittable.
\end{proposition}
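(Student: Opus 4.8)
The plan is to reuse the edge-count machinery developed in the preceding propositions. In particular, Proposition~\ref{prop:bipart1} already asserts that $K_{m,n}$ is not $k$-splittable whenever $k < (mn+4)/(2m+2n)$, so it suffices to verify this strict inequality for the specific parameters $m = 2k+1$ and $n = 4k^2+2k-3$.

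First I would substitute these values and simplify. A short computation gives $mn = 8k^3 + 8k^2 - 4k - 3$, hence $mn + 4 = 8k^3 + 8k^2 - 4k + 1$, while $m + n = 4k^2 + 4k - 2$ and therefore $2(m+n) = 8k^2 + 8k - 4$. Since $2(m+n) > 0$ for every $k \ge 1$, the inequality $k < (mn+4)/(2m+2n)$ is equivalent to $2k(m+n) < mn+4$, i.e.\ to $8k^3 + 8k^2 - 4k < 8k^3 + 8k^2 - 4k + 1$, which obviously holds. Applying Proposition~\ref{prop:bipart1} then finishes the argument. (One could equivalently plug $d = m+n$ into Equation~\eqref{eq2} and observe it fails.)

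There is essentially no obstacle here; the only point to watch is that Proposition~\ref{prop:bipart1} requires a \emph{strict} inequality. It is worth noting that $mn + 4 - 2k(m+n)$ evaluates to exactly $1$, so the bound is as tight as it can be: replacing $n$ by $4k^2+2k-4$ makes $mn+4-2k(m+n)$ vanish and the edge count no longer rules out $k$-splittability, which is precisely why the value $4k^2+2k-3$ is singled out. As a sanity check, for $k=1$ the statement specializes to ``$K_{3,3}$ is not planar'', and indeed $mn+4 = 13 > 12 = 2(m+n)$.
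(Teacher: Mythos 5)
Your proof is correct and takes essentially the same route as the paper: both arguments boil down to the edge-count inequality in Equation~\eqref{eq2}. The paper substitutes $m=2k+1$ directly into Equation~\eqref{eq2} and solves for the admissible range of~$n$, whereas you invoke Proposition~\ref{prop:bipart1} (itself just a rearrangement of Equation~\eqref{eq2}) and verify the strict inequality numerically; the two computations are equivalent, and your remark that $mn+4-2k(m+n)=1$ correctly identifies the tightness.
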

\begin{pf}
   To verify this, observe that if $m=2k+1$,  then by Equation~\eqref{eq2} we obtain 
  \begin{align*}&(2k+1)^2-(2k+1)d+2kd-4 \geq 0 \\
  \Leftrightarrow&~d \leq 4k^2+4k-3 \\
  \Leftrightarrow&~n \leq 4k^2+2k-4.\tag*{\qed}
  \end{align*}
\end{pf}

\begin{proposition}
$K_{2k, n} $ is $k$-splittable for every integer $n$.
\end{proposition}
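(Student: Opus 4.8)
The plan is to exhibit an explicit planar $k$-split of $K_{2k,n}$ by decomposing its edge set into $k$ copies of $K_{2,n}$ which, after splitting, become \emph{pairwise disjoint}. Denote the bipartition of $K_{2k,n}$ by $A\cup B$ with $|A|=2k$ and $|B|=n$, and fix an arbitrary partition of $A$ into $k$ pairs $P_1,\dots,P_k$. I would leave each vertex of $A$ unsplit, and split each vertex $b\in B$ into exactly $k$ copies $b^1,\dots,b^k$, making $b^i$ adjacent only to the two vertices of $P_i$.

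First I would verify that this is a legitimate $k$-split: every vertex of $B$ is replaced by at most $k$ new vertices, every vertex of $A$ remains a single (unsplit) vertex, and for the edge $ab$ of $K_{2k,n}$ the neighbour $a$ belongs to exactly one pair $P_i$, hence is joined to the copy $b^i$; so each original neighbour of $b$ is adjacent to at least one copy of $b$, as required.

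Then I would observe that the resulting graph is precisely the disjoint union, over $i\in\{1,\dots,k\}$, of the complete bipartite graph between $P_i$ and $\{b^i : b\in B\}$. Each such graph is a copy of $K_{2,n}$, which is planar, and a disjoint union of planar graphs is planar; hence the $k$-split is planar and $K_{2k,n}$ is $k$-splittable.

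There is essentially no obstacle here once one makes the right choice of \emph{which} side to split: splitting $B$ and partitioning $A$ keeps the $k$ pieces vertex-disjoint, so no planarity conflict can arise from overlaying them. (By contrast, keeping $B$ intact and splitting $A$ would force the $k$ copies of $K_{2,n}$ to share all $n$ vertices of $B$, and such an overlay is not planar in general.) It may also be worth remarking that this result complements the earlier proposition that $K_{2k+1,4k^2+2k-3}$ is not $k$-splittable: with a bipartition class of size $2k$ the other class may be arbitrarily large, whereas with size $2k+1$ it is already bounded.
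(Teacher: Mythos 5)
Your proof is correct and is exactly the paper's approach made explicit: the paper simply observes that $K_{2k,n}$ can be partitioned into $k$ copies of $K_{2,n}$ (each planar), and your construction—pairing up the $2k$-side, splitting each vertex of the $n$-side into $k$ copies, one per pair—is precisely how that partition realizes a planar $k$-split.
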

\begin{proof}
  The proof for this claim is
  straightforward from the observation~$K_{2k,n}$ can be partitioned into~$k$
  copies of~$K_{2,n}$, which is planar. 
\end{proof}

\begin{table}[t]
  \centering
  \renewcommand{\arraystretch}{1.2}
  \begin{tabular}{c@{\quad}cc|cc@{\quad}cc|cc}
    \multicolumn{3}{c|}{$k=2$}        &   \multicolumn{4}{c|}{$k=3$}           & \multicolumn{2}{c}{$k\ge 4$}                            \\ \hline
    $m,14-m$ & $m \leq 4,n$  &&&  $m,22-m$ & $m \leq 6,n$ &&& $m, 4k+4 \sqrt{k^2-1} -m$  \\
    $5,n\leq 16$ & $6,n\leq 10$ &&& $7,n\leq 38$ & $8,n\leq 22$ &&&      $m \leq 2k,n$              \\
    $7,n\leq 8$ &               &&& $9,n\leq 16$ & $10,n\leq 14$    &&&   $m>2k ,n \leq \frac{2km-4}{m-2k}$                            \\
					&                  &&& $11,n\leq 12$ &        &&&                               
  \end{tabular}
  \medskip
  \caption{All complete bipartite graphs that fulfill the edge requirements of Equation~\eqref{eq2}. 
    A table entry $m,n$ corresponds to the complete bipartite graph $K_{m,n}$.}
  \label{table:kpartite}
  \renewcommand{\arraystretch}{1}
\end{table}

Table~\ref{table:kpartite} summarizes the observations above by listing all 
complete bipartite graphs which satisfy the necessary conditions
provided above for different values of~$k$.

\subsection{Graphs with Maximum Degree $\Delta$}

Recall that the planar split thickness of a graph is bounded by its arboricity. 
By definition, any maximum-degree-$\Delta$ graph has degeneracy\footnote{A graph $G$ is $k$-degenerate if every subgraph of $G$ contains a vertex of degree at most $k$.} at most $\Delta$
and, thus, arboricity at most~$\Delta$. Hence, the planar split thickness of a 
maximum-degree-$\Delta$ graph is bounded by~$\Delta$.

Moreover, since every graph of maximum degree~2 is planar,
the planar split thickness of any graph with maximum degree~$\Delta$ is bounded by $\lceil \Delta/2\rceil$:
to every vertex~$v$ of degree~$d$, we apply a $\lceil d/2\rceil$-split and arbitrarily
assign at most two edges to each copy of~$v$. This gives a $\lceil \Delta/2\rceil$-split
into a graph of maximum degre~2.
 Therefore, the planar split thickness of a maximum-degree-5 graph is at most~3. The following theorem shows that this bound is tight. 

\begin{theorem}
For any nontrivial minor-closed property~$P$, there exists a graph~$G$ of maximum degree five whose~$P$ split thickness is at least~3.
\end{theorem}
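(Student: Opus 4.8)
The plan is to exhibit, for every nontrivial minor-closed property $P$, a single fixed graph $G$ of maximum degree five that is \emph{not} $2$-splittable into any graph with property $P$. Since $P$ is nontrivial and minor-closed, there is some graph $M$ that does not have property $P$; by minor-closedness every graph containing $M$ as a minor also fails $P$. The natural choice is to take $M$ to be a large complete graph or a large grid (whichever is convenient), and to build $G$ so that \emph{every} $2$-split of $G$ contains $M$ as a minor, forcing the $P$ split thickness of $G$ to exceed $2$, hence to be at least $3$.

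The key steps, in order, are as follows. First I would reduce to the case where $M$ has maximum degree at most three (e.g.\ replace $M$ by a large enough square grid, which is a minor of $K_n$ for appropriate $n$ and still lacks property $P$, since grids contain arbitrarily large clique minors — actually one only needs that \emph{some} bounded-degree graph lacks $P$, which follows because a large grid has a large clique minor). Second, starting from such a bounded-degree target $M$, I would construct $G$ by a local replacement: take $M$ and replace each vertex by a small rigid gadget and each edge by a bundle of parallel paths, arranged so that (i) $G$ has maximum degree five, and (ii) no matter how the at most two copies of each vertex of $G$ partition the incident edges in a $2$-split, the bundles and gadgets still realize an $M$-minor in the split graph. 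The redundancy in the edge-bundles is what defeats the adversary: splitting a vertex of $G$ can separate its incident edges into two groups, but if each ``logical'' adjacency of $M$ is carried by several edge-disjoint paths through $G$, at least one path survives intact in any $2$-split. Third, I would verify the degree bound carefully — each gadget vertex must be designed to have degree exactly (or at most) five while still providing enough connections — and then conclude that any $2$-split of $G$ has an $M$-minor, so does not have property $P$, giving $f_P(G)\ge 3$; combined with the earlier observation that maximum-degree-five graphs are $3$-splittable into maximum-degree-two (hence planar, hence — after possibly noting $P$ need not include planarity — one may instead just state the lower bound), the bound is tight when $P$ is planarity.

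The main obstacle I anticipate is the gadget design that simultaneously (a) keeps the maximum degree at five, (b) guarantees survival of an $M$-minor under \emph{every} possible $2$-split, including splits that behave adversarially at the gadget vertices themselves, and (c) does so with a gadget small enough that the construction is clean. The tension is that a vertex of degree five, when $2$-split, can be cut into a degree-$3$ and a degree-$2$ piece (or degree-$4$/degree-$1$, etc.), so each gadget vertex can effectively ``lose'' connectivity in one of two ways; the gadget must therefore route each needed path through a structure that is robust to either cut. I expect the cleanest route is to make each ``branch vertex'' of the $M$-minor be represented in $G$ not by one vertex but by a connected subgraph (a short path or small tree of degree-$\le 5$ vertices) large enough that after all its vertices are independently $2$-split, the pieces still form one connected component attached to all the required edge-bundles; proving this connectivity claim under all $2^{t}$ choices (where $t$ is the number of vertices in the branch-subgraph) is the combinatorial heart of the argument, and I would handle it by a direct case analysis on a single degree-five vertex's split and then propagate along the path/tree.
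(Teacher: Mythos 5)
Your proposal takes a genuinely different route from the paper, and unfortunately the central construction is left as a sketch that, on closer inspection, does not obviously work.

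First, a factual slip: you assert (twice) that a large square grid contains a large clique minor. Grids are planar, hence $K_5$-minor-free by Wagner's theorem, so this is false. The thing you want is still true — for any graph $H$ there is a bounded-degree (even cubic) graph containing $H$ as a minor, obtained by replacing each high-degree vertex of $H$ with a tree of degree-$3$ vertices — so the step ``reduce to a bounded-degree obstruction $M$'' is salvageable, but not via grids.

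The more serious issue is the gadget/edge-bundle construction. Your robustness argument is ``if each logical adjacency of $M$ is carried by several edge-disjoint paths, at least one survives any $2$-split.'' This is not true as stated: edge-disjoint paths need not be vertex-disjoint, and even if they are, an adversary can break a different path at a different vertex, so $k$ edge-disjoint paths offer no protection against $k$ independent $2$-splits. Your refined proposal — represent each branch vertex by a connected subgraph that stays connected after every vertex is independently $2$-split — also does not hold for the structures you suggest: a path $a\mbox{--}b\mbox{--}c$ disconnects when $b$ is split with one incident edge per copy, and even a $4$-cycle $a\mbox{--}b\mbox{--}c\mbox{--}d\mbox{--}a$ splits into two components if $b$ and $d$ are each split with one incident edge per copy. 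So ``connected subgraph'' is not enough; you would need a gadget whose connectivity survives simultaneous adversarial $2$-splits of \emph{all} its vertices while every vertex has degree at most five, and you have not exhibited such a gadget, nor is it clear one exists. You flag this as ``the combinatorial heart of the argument'' and propose a case analysis, but the examples above suggest the analysis would fail for the gadgets you name. This is a genuine gap, not a routine detail.

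The paper avoids local gadgetry entirely and argues by density. It takes a $5$-regular graph $G$ on $n$ vertices with girth $\Omega(\log n)$ (such graphs exist), observes that any $2$-split of $G$ has at most $2n$ vertices, at least $5n/2$ edges, and girth no smaller than that of $G$. It then applies the theorem that an $n$-vertex, $m$-edge graph of girth $g$ has a minor on $O(n/g)$ vertices with $m - n + O(n/g)$ edges, which here produces a minor whose edge count exceeds its vertex count by a logarithmic factor; by the Kostochka--Thomason bound on the density of $K_h$-minor-free graphs, this forces a $K_h$ minor for any fixed $h$ once $n$ is large enough. Choosing $h$ so that $K_h\notin P$ finishes the argument. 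This non-constructive route sidesteps exactly the obstacle your sketch runs into: rather than trying to track a prescribed minor through every possible $2$-split, it shows that no $2$-split can be sparse or low-girth enough to avoid having dense minors, and hence clique minors. If you want to pursue a constructive gadget proof you would need a fundamentally new idea for the gadget; as written, the proposal does not establish the theorem.
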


\begin{proof}
This follows from a combination of the following observations:
\begin{compactenum}
\item\label{obs:log-girth} There exist arbitrarily large 5-regular graphs with girth~$\Omega(\log n)$~\cite{Mor-JCTB-94}.
\item Splitting a graph cannot decrease its girth.
\item\label{obs:clique-threshold} For every~$h$, the $K_h$-minor-free $n$-vertex graphs all have at most~$O(nh\sqrt{\log h})$ edges~\cite{Tho-JCTB-01}.
\item\label{obs:dense-minor} Every graph with~$n$ vertices, $m$ edges, and girth~$g$ has a minor with~$O(n/g)$ vertices and $m-n+O(n/g)$ edges~\cite{BorEppZhu-GD-14}.
\end{compactenum}
Thus, let~$h$ be large enough that~$K_h$ does not have property~$P$.
If~$G$ is a sufficiently large $n$-vertex 5-regular graph with logarithmic 
girth (Observation~\ref{obs:log-girth}), then any 2-split of~$G$ will 
have at most~$2n$ vertices and at least~$5n/2$ edges. By 
Observation~\ref{obs:dense-minor}, this 2-split will have a minor whose number 
of edges is larger by a logarithmic 
factor than its number of vertices, and for~$n$ sufficiently large this 
factor will be large enough to ensure that a~$K_h$ minor exists within the 
2-split of~$G$ (by Observation~\ref{obs:clique-threshold}). Thus,~$G$ cannot be 
2-split into a graph with property~$P$.
\end{proof}

\subsection{Graphs of (non-)orientable genus~1}

The splitting number has been studied for the projective 
plane~\cite{Hartsfield87} and on the torus~\cite{Hartsfield86}. Hence, it is 
natural to study split thickness on different surfaces. 

\begin{theorem}
  Any graph~$G$ of (non-)orientable genus~1 is 2-splittable.
\end{theorem}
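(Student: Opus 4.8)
The plan is to exploit the fact that a graph of orientable or non-orientable genus~1 has an embedding on the torus, the projective plane, or the Klein bottle, and to show that in each case a suitable $2$-split ``cuts'' the surface open into a disk, yielding a planar drawing. The key topological fact is that each of these surfaces can be obtained from a polygon (a square for the torus and Klein bottle, a digon/square for the projective plane) by identifying boundary edges in pairs; equivalently, cutting the surface along a single non-separating closed curve (for the projective plane) or along a system of two closed curves meeting at one point (for the torus and Klein bottle) produces a disk. So the first step is: fix an embedding of $G$ on the relevant surface $S$, and choose a cut system $C$ (one curve for $\mathbb{RP}^2$, the standard ``figure-eight'' pair for the torus/Klein bottle) that is disjoint from the vertices of $G$ and crosses the edges of $G$ transversally in finitely many points.

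The second step is to perturb $C$ so that it passes through each \emph{vertex region} in a controlled way and, more importantly, so that for every vertex $v$ the curves of $C$ do not separate the neighbors of $v$ into too many groups. The cleanest way to arrange this is to route $C$ so that it avoids all vertices entirely and instead only crosses edges; then cutting along $C$ replaces each edge crossed by $C$ with a pair of ``half-edges'' going to the boundary of the resulting disk, which we reconnect around the boundary. The subtlety is that cutting along $C$ may split a single vertex $v$ of $G$ into several pieces — one for each region into which the edges incident to $v$ are divided by $C$ near $v$. Since $C$ is a single curve (projective plane) or two curves through a common basepoint (torus/Klein bottle), and since we may assume $C$ passes near each vertex at most a bounded number of times, a local analysis shows each vertex is split into at most two pieces: route each curve of $C$ so that, in the rotation at $v$, it ``enters and leaves'' through a single gap between two consecutive edges, so the incident edges of $v$ stay in one contiguous arc and $v$ is not split at all — unless a curve genuinely must pass through the vertex's location, in which case we deliberately place the basepoint away from every vertex. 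Thus no vertex need be split more than once, except possibly the vertices whose edges are crossed; for those, the $2$-split assigns the two half-edge-endpoints created at the cut to two copies of the vertex, one on each side of $C$.

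The third step is to verify that after cutting we obtain a planar graph, and that it is a $2$-split of $G$. Cutting the embedded surface along $C$ yields a disk $D$ with $G$ drawn inside it, except that every edge $e$ of $G$ that crossed $C$ exactly $t$ times has been broken into $t+1$ arcs whose new endpoints lie on $\partial D$; matching endpoints on $\partial D$ correspond under the identification that recovers $S$ from $D$. To turn this into a genuine $2$-split, we need each edge of $G$ to be crossed by $C$ at most once — otherwise an edge is broken into three or more pieces and cannot be repaired by splitting only its two endpoints. This is achieved by first choosing $C$ to be a shortest cut system with respect to the given embedding (or by a standard uncrossing/exchange argument): a curve crossing some edge $e$ twice can be rerouted along $e$ to remove both crossings without creating new ones, since $e$ together with a sub-arc of $C$ bounds a bigon that can be pushed across. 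After this normalization every edge is cut at most once, so the graph obtained by leaving the interior arcs intact and, for each cut edge $(u,v)$, attaching its two fragments to two copies $u_1,u_2$ of $u$ (say $u_1$ on one side of $C$, $u_2$ on the other) and similarly to copies of $v$, is exactly a $2$-split of $G$, and it is drawn without crossings inside the disk $D$, hence planar.

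The main obstacle is precisely this normalization: ensuring that the chosen cut system meets every edge of $G$ at most once while still cutting the surface into a disk, and simultaneously ensuring that the curves avoid the vertices so that each vertex is copied at most twice. One must argue that the bigon-removal moves terminate and do not destroy the property that $S$ cut along $C$ is a disk (they don't, since they are ambient isotopies of $C$ rel the property of being a cut system), and one must handle the basepoint of the figure-eight in the torus/Klein bottle case carefully — that basepoint is the one place where the two curves meet, and cutting there could in principle split one vertex into up to four pieces, so the basepoint must be placed in the interior of a face of the embedding. Once the cut system is in this normal form, the rest is routine: Euler-characteristic bookkeeping confirms the result is planar, and the edge-to-copy assignment confirms it is a valid $2$-split.
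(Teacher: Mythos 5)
The paper's proof is quite different from yours and avoids the obstacles you flag. For the torus, the paper does \emph{not} cut into a disk: it cuts along a single noncontractible simple closed curve, obtaining an annulus, which is already planar; split copies of vertices whose edges cross that one curve are placed in the exterior region of the annulus, and the second generator is handled by routing edges around the annulus. For the projective plane, the paper does not cut at all but invokes Negami's theorem that any projective-planar graph lifts to a $2$-fold planar cover on the sphere, which is automatically a $2$-split. (Also, non-orientable genus~$1$ is the projective plane only; the Klein bottle has non-orientable genus~$2$ and is not part of the statement.)

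Your disk-cutting approach has two genuine gaps. First, for the torus your cut system consists of two curves $a,b$ through a common basepoint, and you need every edge of $G$ to be crossed by $a\cup b$ at most once in total. The bigon-removal argument removes consecutive crossings of an edge with a \emph{single} curve, but it does not address an edge crossed once by $a$ and once by $b$; and such edges are unavoidable. For example, put one vertex on the torus with three loops in the homology classes $(1,0)$, $(0,1)$, and $(1,1)$: any pair of simple closed curves that cuts the torus into a disk crosses at least one of these loops twice, so that loop is broken into three arcs, which no $2$-split can repair. Cutting along just one curve (as the paper does) avoids this: one can always take a simple noncontractible cycle in the dual graph, which crosses each edge at most once, and the resulting annulus is already planar. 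Second, even where each edge is cut once, your final step --- ``attaching its two fragments to two copies \dots and it is drawn without crossings inside the disk $D$'' --- does not go through inside $D$. The two boundary endpoints created by cutting an edge lie at identified (for $\mathbb{RP}^2$, antipodal) points of $\partial D$; the copy of $u$ needed at the far end of $v$'s fragment must sit near that boundary point, and if $u$ has $k\ge 3$ cut edges those boundary points are scattered around $\partial D$, forcing $k$ copies of $u$. The clean fix is to glue a reflected copy $D'$ to $D$ along $\partial D$, which reconstructs the double cover (the sphere for $\mathbb{RP}^2$) --- but at that point you have rediscovered Negami's double-cover argument, which is exactly the paper's route.
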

\begin{proof}
  For graphs of orientable genus~1, that is, graphs that are embeddable on
  the torus, we draw the torus as a box with periodic boundary conditions.
  For the edges that cross the bottom boundary, we apply a split to the
  vertex whose edge part goes to the bottom boundary, and place its copy above 
  the top boundary. Then, we can simply reroute the edges that cross the left
  and right boundary around the whole drawing; see Figure~\ref{fig:genus}.
   
  For graphs of non-orientable genus~1, that is, graphs that are embeddable on the
  projective plane, it has been shown by Negami~\cite{n-eppeg-DM86} that 
  they have 2-fold planar cover which can be obtained by their preimage in
  the canonical double covering of the projective plane, which is a sphere.
  This implies that these graphs are 2-splittable.
\end{proof}

\section{NP-hardness}
\label{sec:np}

Faria et al.~\cite{FariaFM98} showed that determining the splitting number of a graph is NP-hard, even when the input is restricted to cubic graphs. Since cubic graphs are 2-splittable, their hardness proof does not readily imply the hardness of recognizing 2-splittable graphs. In this section, we show that it is indeed NP-hard to recognize graphs that are 2-splittable into a planar graph.

The reduction is from planar 3-SAT with a cycle through the clause vertices~\cite{KratochvilLN91}.
Specifically the input is an instance of 3-SAT with variables $X$ and clauses $C$ 
such that the following graph is planar: the vertex set is $X \cup C$; we add edge 
$(x,c)$ if variable $x$ appears in clause $c$; and we add a cycle through all the clause vertices. 
Kratochv{\'{\i}}l et al.~\cite{KratochvilLN91} showed that this version of 3-SAT (\textsc{Planar Cycle 3-SAT}) remains NP-complete.

\begin{figure}[t]
  \centering
  \includegraphics{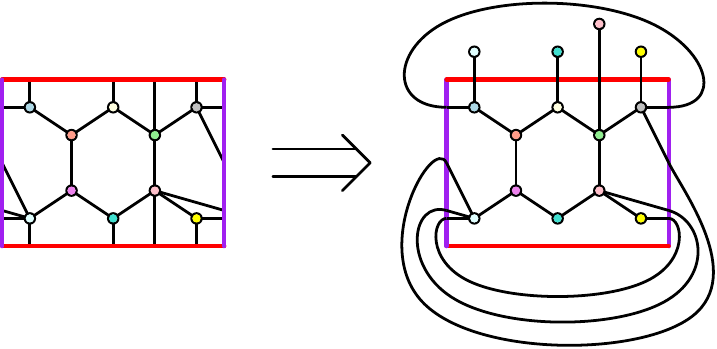}
  \caption{A drawing of a toroidal graph.}
  \label{fig:genus}
\end{figure}

For our construction, we will need to restrict the splitting options for some vertices.     
For a vertex $v$, \emph{attaching~$K_{12}$ to $v$} means inserting a new copy of $K_{12}$ into the graph and identifying~$v$ with a vertex of this~$K_{12}$. 
A vertex that has a $K_{12}$ attached will be called a ``K-vertex".

\begin{lemma}
If $C$ is a cycle of K-vertices then in any planar 2-split, the cycle $C$ appears intact, i.e.~for each edge of $C$ there is a copy of the edge in the 2-split such that the copies are joined in a cycle.  
\label{lemma:no-split}
\end{lemma}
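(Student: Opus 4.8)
\textbf{Proof proposal for Lemma~\ref{lemma:no-split}.}

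The plan is to argue vertex by vertex along the cycle $C$, using Lemma~\ref{non-adj} to control how each K-vertex can be split, and then a connectivity argument to show that once each K-vertex is pinned down, the cycle edges must reassemble into a single cycle. First I would recall the setup: each vertex $v$ of $C$ has a copy of $K_{12}$ attached via identification of $v$ with one of the $K_{12}$-vertices, so in any planar 2-split $G^2$ of the whole graph, the induced subgraph on the copies of this $K_{12}$ is a 2-split of $K_{12}$. By Lemma~\ref{non-adj}, this 2-split uses exactly 24 vertices, is a triangulation, and — crucially — the two copies $v_1,v_2$ of $v$ are not incident to a common face. In particular $v$ is genuinely split into two vertices (so ``split exactly once''), and the two copies are separated in the planar embedding by the triangulated $K_{12}$-block.

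Next I would analyze where the two cycle edges at $v$ attach. Let the neighbors of $v$ on $C$ be $u$ and $w$. Each of the (copies of the) edges $(u,v)$ and $(v,w)$ in $G^2$ must be incident to $v_1$ or to $v_2$. The key sub-claim is that both cycle edges at $v$ end up at the \emph{same} copy of $v$ — say both at $v_1$. Suppose not: say a copy of $(u,v)$ is at $v_1$ and a copy of $(v,w)$ is at $v_2$. Since the $K_{12}^2$-block at $v$ is a triangulation on $24$ vertices in which $v_1$ and $v_2$ share no face, $v_1$ and $v_2$ lie in ``different regions'': deleting the interior of the triangulated block, $v_1$ and $v_2$ are separated. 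I would make this precise by noting that in the planar embedding, the triangulated $K_{12}^2$ forms a disk whose outer boundary is a triangle, and $v_1, v_2$ cannot both be on that outer triangle (they are non-adjacent, and on a triangle all three vertices are pairwise adjacent/cofacial); so at least one of them, say $v_2$, is an \emph{interior} vertex of the triangulated block, hence $v_2$'s entire neighborhood lies inside the block and it can have no incident edge leaving the block. That contradicts $(v,w)$ being incident to $v_2$. Hence both external cycle edges at $v$ attach to the \emph{same} copy of $v$, the one that lies on the outer triangle of the block; call it the ``cycle copy'' $\hat v$.

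Having done this at every vertex, the cycle $C$ in $G^2$ is realized as follows: for each $v\in C$ there is a well-defined cycle copy $\hat v$, and for each edge $(v,w)$ of $C$ there is at least one copy of the edge in $G^2$, and by the previous paragraph every such copy has both endpoints among $\{\hat v,\hat w\}$ (a copy of $(v,w)$ incident to a non-cycle copy of $v$ would be an edge leaving an interior vertex of a triangulated block — impossible — or it is simply the edge $\hat v\hat w$). Therefore the copies of the cycle edges, restricted to cycle copies, form exactly the cycle $\hat v_0 \hat v_1 \cdots \hat v_{\ell-1}\hat v_0$, i.e.\ $C$ appears intact. I would finish by noting that because each of the two $C$-edges at $v$ lands on $\hat v$ and $\hat v$'s local rotation is already crowded by the $K_{12}^2$-triangulation, the cycle is in fact embedded, but that is not needed for the statement.

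The main obstacle I expect is making the ``interior vertex has no outgoing edge'' step airtight — specifically, pinning down that of the two non-cofacial copies $v_1,v_2$ of $v$ inside its $K_{12}^2$-triangulation, exactly one lies on the outer face of that triangulated disk and the other is strictly interior, and that a strictly interior vertex of this rigid triangulated block cannot be incident to any edge of $G^2$ outside the block. This requires being careful that the $K_{12}^2$-block really does embed as a triangulated disk whose complement in the sphere is a single region (so that ``the rest of $G^2$'' attaches only at the outer triangle), using that $G^2$ is planar and the block is a triangulation on the minimum number of vertices. Once that is established, everything else is bookkeeping.
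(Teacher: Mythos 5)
Your target --- show that at each K-vertex $v$ of $C$ the two cycle edges attach to the same copy of $v$ --- is the right one, and it matches the paper's goal. But the local argument you propose for reaching it is not correct, and you have in fact flagged the problematic step yourself at the end while presenting it as a detail to be ``pinned down'' rather than a genuine flaw.

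The claim that the $K_{12}^2$ block $B$ at $v$ ``embeds as a triangulated disk whose complement in the sphere is a single region, so that the rest of $G^2$ attaches only at the outer triangle'' is false. In the planar embedding of the full graph $G^2$, all faces of the subgraph $B$ are triangles, but there is no privileged outer face of $B$: disjoint pieces of $G^2\setminus B$ may be drawn inside several \emph{different} triangular faces of $B$. Since both copies $v_1,v_2$ of $v$ are incident to triangular faces, one piece of the rest of $G^2$ can attach to $v_1$ inside one face while another piece attaches to $v_2$ inside a different face. (This configuration is exactly what the paper exploits in showing that the graph $H$ formed by two $K_{12}$'s sharing a vertex is not 2-splittable: there, the second $K_{12}^2$ block sits inside a face of the first.) So the dichotomy ``one copy of $v$ is on the outer triangle, the other is interior and has no outgoing edges'' is not a property of the block in isolation, and your conclusion that both cycle edges at $v$ land on the same copy does not follow from it.

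The paper closes this gap with a global connectivity argument rather than a local one. Fix the copy $v^1$ of $v$ that receives the edge $(v,c_1)$. The union of the $K_{12}^2$ blocks $H_1,\dots,H_t$ at the other cycle vertices, together with the cycle edges $(c_1,c_2),\dots,(c_{t-1},c_t)$, is a connected subgraph of $G^2$ disjoint from the block $H_0$ at $v$ (each $H_i$ is itself connected). A connected subgraph disjoint from $H_0$ lies in a single face $F$ of $H_0$; since $H_1$ is joined to $v^1$ by the edge $(v,c_1)$, $F$ is incident to $v^1$, and since $v^1,v^2$ share no face of $H_0$ by Lemma~\ref{non-adj}, $v^2$ is not on $F$. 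Hence the returning edge $(c_t,v)$, running from $H_t$ inside $F$ to a copy of $v$ on the boundary of $F$, must also go to $v^1$. This conclusion cannot be extracted from the block at $v$ alone; the connectivity of the rest of the cycle is essential.
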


\begin{proof}
  Let~$v$ be a vertex of cycle~$C$.  We will argue that the two edges 
  incident to~$v$ in~$C$ are incident to the same copy of~$v$ in the planar 
  2-split.  This implies that the cycle appears intact in the planar 2-split.

  Suppose the vertices of $C$ are $v=c_0,c_1, c_2, \ldots, c_t$ in that order, with 
  an edge $(v,c_t)$. As noted earlier in the paper, a planar 2-split of~$K_{12}$ 
  must split all vertices, and no two copies of a vertex share a face in the 
  planar 2-split.  Furthermore, any planar 2-split of~$K_{12}$ is connected.

  Let~$H_i$ be the induced planar 2-split of the~$K_{12}$ incident to~$c_i$. 
  Let~$v^1$ and~$v^2$ be the two copies of~$v$ in~$H_0$. Suppose that the copy
  of edge $(v,c_1)$ in the planar 2-split is incident to~$v^1$.  Our goal is to
  show that the copy of edge~$(v,c_t)$ in the planar 2-split is also incident 
  to~$v^1$. $H_1$ must lie in a face~$F$ of~$H_0$ that is incident to~$v^1$. 
  Since there is an edge $(c_1,c_2)$, $H_2$ must also lie in face~$F$ of~$H_0$.  
  Continuing in this way, we find that~$H_t$ must also lie in the face~$F$.  
  Therefore, the copy of the edge $(c_t,v)$ must be incident to~$v^1$ in the 
  planar 2-split.
\end{proof}

Note that the Lemma extends to any 2-connected subgraph of K-vertices.

Given an instance of \textsc{Planar Cycle 3-SAT}, we construct a graph as follows.   We will make a K-vertex~$c_j$ for each clause $c_j$, and join them in a cycle as given in the input instance.  By the Lemma above, this ``clause'' cycle will appear intact in any planar 2-split of the graph. 

Let $T$ be any other cycle of K-vertices, disjoint from the clause cycle.  $T$ will also appear intact in any planar 2-split, so we can identify the ``outside'' of the cycle $T$ as the side that contains the clause cycle.  The other side is the ``inside''.

For each variable~$v_i$, we create a vertex gadget as shown in Figures~\ref{fig:var-gadget-1}--\subref{fig:var-gadget-2} with six K-vertices: two special vertices~$v_i$ and~$\bar v_i$ and four other vertices forming  a ``variable cycle'' $v_i^1, v_i^2, v_i^3, v_i^4$ together with two paths $v_i^1, v_i, v_i^3$ and $v_i^2, \bar v_i, v_i^4$.  Observe that, in an embedding of any planar 2-split, the vertex gadget will appear intact, and exactly one of~$v_i$ and~$\bar v_i$ must lie inside the variable cycle and exactly one must lie outside the variable cycle.  Our intended correspondence is that the one that lies outside is the one that is set to \true.

\begin{figure}[tb]
\centering
\subfigure[]{\includegraphics[page=1]{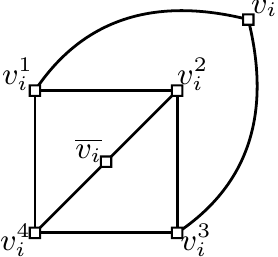}\label{fig:var-gadget-1}}
\hfil
\subfigure[]{\includegraphics[page=2]{variable-gadget}\label{fig:var-gadget-2}}
\hfil
\subfigure[]{\includegraphics{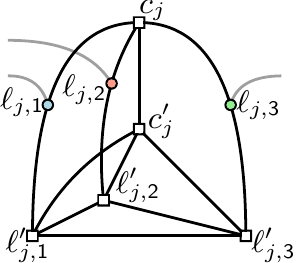}\label{fig:clause-gadget}}
\caption{\subref{fig:var-gadget-1} A variable gadget shown in the planar configuration corresponding to 
 $v_i=\true$ and \subref{fig:var-gadget-2} in the planar configuration corresponding to $v_i=\false$. \subref{fig:clause-gadget} A clause gadget---a~$K_5$ with added subdivision vertices $\ell_{j,1}, \ell_{j,2}, \ell_{j,3}$ corresponding to the literals in the clause.  The half-edges join the corresponding variable vertices.}
\label{fig:variable-clause}
\end{figure}

For each clause~$c_j$ with literals $\ell_{j,k}$, $k=1,2,3$, we create a~$K_5$ clause gadget, as shown in Figure~\ref{fig:clause-gadget}, with five K-vertices: two vertices  $c_j, c'_j$ and three vertices~$\ell'_{j,k}$.    Furthermore, we subdivide each edge $(c_j, \ell'_{j,k})$ by a vertex~$\ell_{j,k}$ that is {\it not} a K-vertex.   If literal~$\ell_{j,k}$ is~$v_i$, then we add an edge  $(v_i,\ell_{j,k})$ and if literal~$\ell_{j,k}$ is~$\bar v_i$, then we add an edge~$(\bar v_i,\ell_{j,k})$. 
  Figure~\ref{fig:example}  shows an example of the construction.
  
\begin{figure}[pt]
 \centering
\subfigure[]{\includegraphics[page=1]{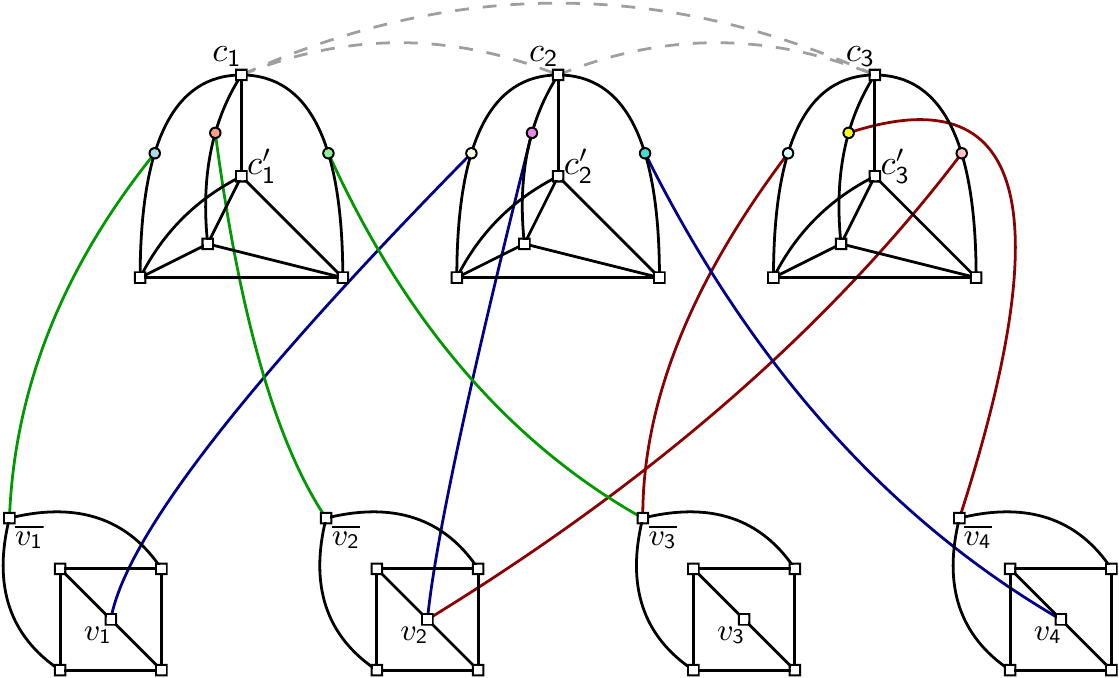}\label{fig:example:a}}
\\
\subfigure[]{\includegraphics[page=2]{example3SAT-new.pdf} \label{fig:example:b}}
\caption{\subref{fig:example:a} A graph that corresponds to the $3$-SAT instance $\phi = (\bar{v_1} \vee \bar{v_2}  
\vee \bar{v_3})\wedge (v_1 \vee v_2 \vee v_4) \wedge (v_2 \vee  \bar{v_3} \vee \bar{v_4})$. \subref{fig:example:b} A planarization of the graph in~\subref{fig:example:a} that satisfies $\phi$: $v_1 = \mbox{true}$, $v_2 = v_3 = v_4 = \mbox{false}$} 
\label{fig:example}
\end{figure} 

Note that the only non-K-vertices are the $\ell_{j,k}$'s, which have degree 3 and can be split in one of three ways as shown in Figures~\ref{fig:clause-splits-1}--\subref{fig:clause-splits-3}.  In each possibility, one edge incident to $\ell_{j,k}$ is ``split off'' from the other two.  If the edge to the variable gadget is split off from the other two, we call this the \emph{F-split}.

Observe that if, in the clause gadget for $c_j$, all three of $\ell_{j,1}, \ell_{j,2}, \ell_{j,3}$ use the F-split (or no split), then we effectively have edges from $c_j$ to each of $\ell'_{j,1}, \ell'_{j,2}, \ell'_{j,3}$, so the clause gadget is a $K_5$ which must remain intact after the 2-split and is not planar.  This means that in any planar 2-split of the clause gadget, at least one of $\ell_{j,1}, \ell_{j,2}, \ell_{j,3}$ must be split with a non-F-split.

\begin{lemma}\label{lem:satplanar}
  If the formula is satisfiable, then the graph has a planar 2-split.
\end{lemma}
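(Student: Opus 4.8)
The plan is to take a satisfying assignment and build an explicit planar drawing of a 2-split of the whole construction, using the planar embedding of the underlying \textsc{Planar Cycle 3-SAT} instance as a skeleton. First I would fix a planar embedding of the input graph (variables, clauses, the clause cycle, and the variable--clause edges) and also fix a satisfying truth assignment. Around this skeleton I would place the gadgets: each clause $c_j$ becomes the $K_5$ clause gadget, the clause cycle is realized by the intact ``clause cycle'' of K-vertices, and each variable $v_i$ is replaced by its six-K-vertex variable gadget. By Lemma~\ref{lemma:no-split} (and its extension to 2-connected subgraphs of K-vertices) every such gadget, being 2-connected and built of K-vertices, appears intact in the 2-split, so the only freedom is (a) which side of the variable cycle carries $v_i$ and which carries $\bar v_i$, and (b) how each degree-3 subdivision vertex $\ell_{j,k}$ is split.

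Next I would use the truth assignment to make these choices. For each variable $v_i$, orient its gadget so that the literal vertex made \true\ by the assignment lies \emph{outside} the variable cycle and the other lies \emph{inside}; this matches the intended correspondence described before the lemma. Route the half-edge from $\ell_{j,k}$ to its variable vertex to the copy of that vertex that sits on the appropriate side. The key structural point is the one already observed in the excerpt: in the clause gadget for $c_j$, the $K_5$ on $\{c_j, c'_j, \ell'_{j,1}, \ell'_{j,2}, \ell'_{j,3}\}$ is nonplanar and intact, so at least one $\ell_{j,k}$ must use a non-F-split. Since the assignment satisfies $c_j$, some literal $\ell_{j,k}$ is \true, hence its variable vertex lies outside the variable cycle and can be reached by routing the $\ell_{j,k}$-to-variable edge ``around the outside'', which means $\ell_{j,k}$ is split so that the variable edge is separated on one copy while the two clause-gadget edges $(\ell_{j,k}, c_j)$ and $(\ell_{j,k}, \ell'_{j,k})$ stay on the other copy, together forming the needed non-F-split. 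For the remaining (false or unused) literals, use the F-split at $\ell_{j,k}$, which turns the clause gadget locally into a path $c_j$--$\ell_{j,k}$ and an isolated-within-the-gadget edge to $\ell'_{j,k}$, so no nonplanar $K_5$ is forced. Then I would verify that, with these choices, each clause gadget together with the incident split edges draws planarly inside a face of the skeleton, and that all these local drawings glue together along the intact clause cycle and variable cycles without crossings, yielding a global planar drawing of a 2-split of the constructed graph.

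The main obstacle I expect is the careful routing and non-interference argument: I must check that routing a satisfied literal's edge ``to the outside'' of its variable cycle does not conflict with (i) the clause cycle, (ii) other variable gadgets sharing the same face of the skeleton, or (iii) other literal edges into the same variable (a variable can appear in several clauses, so several half-edges may need the outside or the inside simultaneously). This is handled by exploiting the planarity of the skeleton: the cyclic order of clauses around a variable in the fixed embedding lets me place the outside-bound and inside-bound half-edges in disjoint angular sectors around $v_i$ and $\bar v_i$ respectively, and the clause cycle bounds the region so that ``outside the variable cycle'' consistently means ``towards the clause cycle side.'' The rest---that each individual gadget with its prescribed internal splits admits a planar drawing with the right edges on the right faces---is routine casework on the figures already given (Figures~\ref{fig:var-gadget-1}--\subref{fig:clause-splits-3}), and I would present it by exhibiting the drawing rather than by an abstract argument.
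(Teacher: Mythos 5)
Your overall plan matches the paper's: use the satisfying assignment to orient each variable gadget (true literal vertex outside the variable cycle, false literal vertex inside), split the subdivision vertices $\ell_{j,k}$ accordingly so that one edge of the clause $K_5$ is broken at a true literal, and glue the pieces along the planar skeleton coming from the \textsc{Planar Cycle 3-SAT} instance. However, you have the F-split and non-F-split roles reversed, and this is not merely a naming slip but breaks the argument in two places.

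Recall the definition in the paper: the \emph{F-split} is the one that isolates the edge to the variable gadget, keeping $(\ell_{j,k},c_j)$ and $(\ell_{j,k},\ell'_{j,k})$ together on the other copy. For a true literal you describe exactly this split ("the variable edge is separated on one copy while the two clause-gadget edges \dots stay on the other copy") and then call it "the needed non-F-split." But that split preserves the subdivided path $c_j$--$\ell_{j,k}$--$\ell'_{j,k}$, so it does \emph{not} cut the $K_5$. In particular, if all three literals of a clause are true under the chosen assignment, your construction applies this split to all three $\ell_{j,k}$'s and the $K_5$ on $\{c_j,c'_j,\ell'_{j,1},\ell'_{j,2},\ell'_{j,3}\}$ remains intact and nonplanar. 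Symmetrically, for false literals you isolate $\ell'_{j,k}$ from $c_j$, which is a non-F-split, and then the variable edge is carried by a copy of $\ell_{j,k}$ that also holds a clause-gadget edge; that copy cannot be tucked inside the variable cycle to reach the false literal vertex without crossing the (intact) variable cycle. The paper's assignment is the opposite of yours: a \emph{true} literal $\ell_{j,k}$ gets the non-F-split that isolates the edge to $\ell'_{j,k}$ (the copy with the $c_j$ edge and the variable edge has both endpoints outside the variable cycle, so it routes cleanly, and the $K_5$ is cut as in Figure~\ref{fig:clause-sat}); a \emph{false} literal gets the F-split, whose dangling variable-edge copy can be placed inside the variable cycle next to the false literal vertex. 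Swapping your two assignments (and fixing the terminology to match the paper's definition of F-split) repairs the proof and makes it essentially the same as the paper's.
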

\begin{proof}
For every literal $\ell_{j,k}$ that is set to \false, we do an F-split on the vertex~$\ell_{j,k}$.  For every literal $\ell_{j,k}$ that is set to \true, we split off the edge to $\ell'_{j,k}$; see Figure~\ref{fig:clause-splits-2}.  For any K-vertex $v$ incident to edges $E_v$ outside its $K_{12}$, we split all vertices of the $K_{12}$ as required for a planar 2-split of $K_{12}$ but we keep the edges of $E_v$ incident to the same copy of $v$, which we identify as the ``real'' $v$.

If variable $v_i$ is set to \true, we place (real) vertex $v_i$ outside the variable cycle and we place vertex $\bar v_i$ and its dangling edges inside the variable cycle.  If variable~$v_i$ is set to \false, we place vertex $\bar v_i$ outside the variable cycle and we place vertex~$v_i$ and its dangling edges inside the variable cycle.

Consider a clause $c_j$.  It has a true literal, say $\ell_{j,1}$.  We have split off the edge from $\ell_{j,1}$ to $\ell'_{j,1}$ which cuts one edge of the $K_5$ and permits a planar drawing of the clause gadget as shown in Figure~\ref{fig:clause-sat}, with $\ell'_{j,1}$ and its dangling edge inside the cycle $c', \ell'_{j,2}, \ell'_{j,3}$.

\begin{figure}[t]
\centering
\subfigure[]{\includegraphics[scale=.95,page=1]{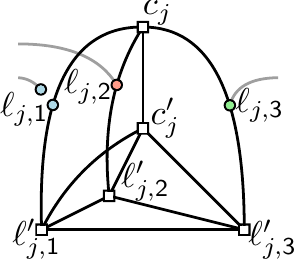}
\label{fig:clause-splits-1}}
\subfigure[]{\includegraphics[scale=.95,page=2]{clause-splits_new}
\label{fig:clause-splits-2}}
\subfigure[]{\includegraphics[scale=.95,page=3]{clause-splits_new}
\label{fig:clause-splits-3}}
\hfil
\subfigure[]{\includegraphics[scale=.95,page=4]{clause-splits_new}\label{fig:clause-sat}}
\caption{\subref{fig:clause-splits-1}--\subref{fig:clause-splits-3} The three ways of splitting $\ell_{j,1}$; \subref{fig:clause-splits-1} is the F-split. \subref{fig:clause-sat} A planar drawing of the clause gadget when  literal $\ell_{j,1}$ is set to \true and the split of vertex $\ell_{j,1}$ results in a dangling edge to $\ell'_{j,1}$. }  
\end{figure}

Because we started with an instance of planar 3-SAT with a cycle through the clause vertices, we know that the graph of clauses versus variables plus the clause cycle is planar.  We make a planar embedding of the split graph based on this, embedding the variable and clause gadgets as described above. 
The resulting embedding is planar.
\end{proof} 

\begin{lemma}\label{lem:planarsat}
  If the graph has a planar 2-split, then the formula is satisfiable.
\end{lemma}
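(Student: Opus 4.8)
The plan is to show that a planar $2$-split of the constructed graph yields a satisfying assignment by reading off, from the embedding, which of $v_i$ or $\bar v_i$ lies outside its variable cycle, and setting that literal to \true. First I would invoke Lemma~\ref{lemma:no-split} (and its extension to $2$-connected subgraphs of K-vertices) to conclude that in any planar $2$-split every rigid gadget survives intact: the clause cycle, each $K_5$ clause gadget, and each variable gadget (the variable cycle together with its two attached paths). In particular, for each variable $i$, the four-cycle $v_i^1,v_i^2,v_i^3,v_i^4$ separates the plane into two regions, and because the paths $v_i^1,v_i,v_i^3$ and $v_i^2,\bar v_i,v_i^4$ are also intact and cross the cycle at alternating vertices, exactly one of $v_i,\bar v_i$ lies strictly inside the variable cycle and the other strictly outside. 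Since the clause cycle is intact and (being disjoint from all variable cycles and connected to them) lies entirely on one side of each variable cycle, the notion of ``outside'' is well-defined and consistent; this lets me define a truth assignment: set $v_i$ to the value \true for whichever of $v_i,\bar v_i$ is on the clause-cycle side.

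Next I would analyze a single clause gadget $c_j$. The gadget is a $K_5$ on $c_j,c'_j,\ell'_{j,1},\ell'_{j,2},\ell'_{j,3}$ with each edge $(c_j,\ell'_{j,k})$ subdivided by the degree-$3$ non-K-vertex $\ell_{j,k}$, which carries a pendant-like edge to the corresponding variable vertex. Since $c_j,c'_j,\ell'_{j,k}$ are all K-vertices, the underlying $K_5$ (after contracting the subdivision vertices when they use the F-split or no split) must remain intact in the $2$-split; as already observed in the text, $K_5$ is not planar, so at least one $\ell_{j,k}$ must use a non-F-split, i.e.\ the edge from $\ell_{j,k}$ to the variable gadget is \emph{not} split off from the other two. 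I would then argue that this forces the corresponding literal to be \true under my assignment: a non-F-split of $\ell_{j,k}$ means the edge to the variable vertex shares a copy with (at least) one of the edges $(\ell_{j,k},c_j)$ or $(\ell_{j,k},\ell'_{j,k})$, so in the embedding the relevant variable vertex $u\in\{v_i,\bar v_i\}$ is reachable via the rigid clause gadget from the clause cycle without crossing anything; since the variable cycle is an intact separating cycle, $u$ must therefore lie on the clause-cycle side, i.e.\ outside its variable cycle, which is exactly the condition for the literal to be \true.

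Therefore every clause has a true literal, so the formula is satisfiable. The main obstacle I anticipate is making the ``side'' / ``reachability'' argument fully rigorous: one needs to be careful that the copies of the edges of a rigid gadget form a single intact embedded copy of that gadget (this is what Lemma~\ref{lemma:no-split} and its $2$-connected extension give, applied to the whole union of K-vertex gadgets, which is $2$-connected along each cycle), and that a face of the embedding containing the intact variable cycle cannot simultaneously contain both the clause cycle and a given variable vertex on opposite sides. I would handle this by using the Jordan curve theorem on the intact variable cycle: its interior and exterior are separated, and every edge from $\ell_{j,k}$ to $u$ (together with the connecting portion of the clause gadget and the clause cycle) forms a connected set disjoint from the variable cycle, hence entirely inside or entirely outside it, and it meets the clause cycle, which is outside, so $u$ is outside. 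Combining over all variables and all clauses completes the proof.
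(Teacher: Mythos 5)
Your proof is correct, but it takes a genuinely different route from the paper's. The paper defines the truth assignment clause-by-clause: for each clause gadget at least one $\ell_{j,k}$ must receive a non-F-split, and the corresponding literal is declared true; consistency is then argued by contradiction, observing that if $v_i$ and $\bar v_i$ were both so declared, the non-F-split dangling edges would create a path of K-vertices from $v_i$ to $\bar v_i$ through the clause gadgets and the clause cycle, which together with the variable gadget yields a subdivided $K_{3,3}$ of K-vertices that must appear intact in the 2-split, contradicting planarity. You instead read the assignment off the embedding variable-by-variable (which of $v_i,\bar v_i$ lies on the clause-cycle side of its separating variable cycle), and then verify directly, via a Jordan-curve/connectivity argument, that each clause's non-F-split literal must lie on the clause-cycle side. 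The two proofs are essentially dual: the paper checks consistency of a clause-defined assignment via a Kuratowski-style forbidden subgraph, while you check satisfaction of a topologically defined assignment by tracing a connected set that avoids the separating cycle; your version is arguably more transparent about what the embedding ``means.'' One detail worth making explicit in your write-up: the copy of $u$ reached by the dangling edge from $\ell_{j,k}$ need not a priori coincide with the gadget copy $u^*$ (the one adjacent to $v_i^1,v_i^3$ or $v_i^2,v_i^4$) that you used to define the assignment. This is harmless because both copies of $u$ lie in the connected triangulation $K_{12}^2$ attached to $u$, which is vertex-disjoint from the variable cycle, so both copies are on the same side of it; adding that one sentence closes the only real gap.
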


\begin{proof}
Consider a planar embedding of a 2-split of the graph. 
As noted above, in each clause gadget, say $c_j$, at least one of the vertices $\ell_{j,k}$, $k=1,2,3$, must be split with a non-F-split.  Suppose that vertex $\ell_{j,k}$ is split with a non-F-split.  If literal $\ell_{j,k}$ is $v_i$ then we will set variable $v_i$ to \true; and if literal $\ell_{j,k}$ is $\bar v_i$  then we will set variable $v_i$ to \false.  We must show that this is a valid truth-value setting.
Suppose not.  Then, for some~$i$, vertex $v_i$ is joined to vertex $\ell_{j,k}$ that is split with a non-F-split, and vertex $\bar v_i$ is joined to vertex $\ell_{r,s}$ that is split with a non-F-split.
But then we essentially have an edge from $v_i$ to a vertex of the $c_j$ clause gadget and an edge from $\bar v_i$ to a vertex of the~$c_r$ clause gadget.  Because each clause gadget is a connected graph of K-vertices, and the clause gadgets are joined by the clause cycle, this gives a path of K-vertices from~$v_i$ to $\bar v_i$.  Then the 6 vertices of the variable gadget for $v_i$ form a subdivided $K_{3,3}$ of K-vertices.  This must remain intact under 2-splits and is non-planar.  This contradicts the assumption that there exists a planar 2-split of the graph. 
\end{proof}

With Lemmas~\ref{lem:satplanar} and~\ref{lem:planarsat}, we can proof the following theorem.

\begin{theorem}
  It is NP-hard to decide whether a graph has planar split thickness~2
  even when the maximum degree is restricted to 15. 
\end{theorem}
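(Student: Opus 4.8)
The plan is to combine Lemmas~\ref{lem:satplanar} and~\ref{lem:planarsat} with the NP-completeness of \textsc{Planar Cycle 3-SAT}~\cite{KratochvilLN91}, and then to check that the constructed graph has bounded degree. Given a \textsc{Planar Cycle 3-SAT} formula~$\phi$ over variables~$X$ and clauses~$C$, the construction described above produces a graph~$G_\phi$ with $O(|X|+|C|)$ K-vertices, each carrying one constant-size copy of~$K_{12}$, together with $O(|C|)$ non-K subdivision vertices~$\ell_{j,k}$; hence $G_\phi$ has polynomial size and is computable in polynomial time. Since $G_\phi$ contains a~$K_{12}$ it is non-planar, so its planar split thickness is at least~$2$. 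By Lemma~\ref{lem:satplanar}, if~$\phi$ is satisfiable then~$G_\phi$ has a planar $2$-split, so its planar split thickness is exactly~$2$; by Lemma~\ref{lem:planarsat}, if~$G_\phi$ has a planar $2$-split then~$\phi$ is satisfiable. Therefore $\phi\mapsto G_\phi$ is a polynomial-time reduction proving that deciding whether a graph has planar split thickness~$2$ is NP-hard.

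It remains to argue that the maximum degree of~$G_\phi$ is at most~$15$. The construction bounds every vertex degree except that of a variable vertex~$v_i$ (or~$\bar v_i$), which gains one edge per occurrence of the corresponding literal. So I would first replace~$\phi$ by an equisatisfiable \textsc{Planar Cycle 3-SAT} instance in which every literal occurs at most twice, using the standard occurrence-reduction trick (split a $t$-occurrence variable~$x$ into $x_1,\dots,x_t$, put~$x_s$ in the $s$-th clause that contained~$x$, and add the implication clauses $(\bar x_1\vee x_2),\dots,(\bar x_t\vee x_1)$ rerouted through~$x$'s old incidences), adapted so that the cycle through the clauses is preserved. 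After this normalisation I would verify the degrees by type: a subdivision vertex~$\ell_{j,k}$ has degree exactly~$3$; a K-vertex has exactly~$11$ neighbours inside its own~$K_{12}$ and at most~$4$ outside it --- a variable-gadget K-vertex has at most two variable-cycle edges and at most two further gadget or literal edges, while a clause-gadget K-vertex has its four~$K_5$-gadget edges and nothing else, provided the clause cycle is threaded through the attached copies of~$K_{12}$ (not through the~$c_j$ themselves), which keeps those cycle vertices at degree~$13$ and adds no external edge to any clause-gadget K-vertex. Hence $\Delta(G_\phi)\le 15$.

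The bottleneck here is the degree accounting, not the logic of the reduction, which Lemmas~\ref{lem:satplanar} and~\ref{lem:planarsat} have already settled: one must confirm that the occurrence-reduction preprocessing leaves the instance planar and its clause cycle intact, and that the clause cycle can be attached to the clause gadgets in a way that never gives a clause-gadget K-vertex a fifth edge outside its~$K_{12}$. With those two checks done, the degree bound of~$15$, and hence the theorem, follows.
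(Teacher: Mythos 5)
Your overall plan --- combine Lemmas~\ref{lem:satplanar} and~\ref{lem:planarsat} with the NP-hardness of \textsc{Planar Cycle 3-SAT} and then bound the degree --- matches the paper, and the first paragraph (NP-hardness of ``planar split thickness exactly $2$'') is fine. However, your method for getting the degree down to $15$ departs from the paper and leaves real gaps, which you yourself flag but do not close.

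The paper's degree argument is short and does not touch the construction: it observes that Kratochv\'il et al.'s reduction applied to Tovey's $3$-occurrence $3$-SAT yields \textsc{Planar Cycle 3-SAT} instances in which each variable occurs in at most $6$ clauses, so the K-vertex of largest external degree (a variable vertex $v_i$, with $2$ path edges plus up to $6$ literal edges) has external degree $8$; and it then \emph{replaces $K_{12}$ by $K_{7,8}$} as the attached gadget. Because a planar $2$-split of $K_{7,8}$ is forced to be a quadrangulation on $30$ vertices with $56$ edges, it must split every vertex with no two copies sharing a face --- the only properties Lemma~\ref{lemma:no-split} needs --- while the identified vertex contributes only $7$ internal edges, giving $7+8=15$. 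Your proposal keeps $K_{12}$ (internal degree $11$), which forces you to push external degree down to $4$ everywhere, and that is where the trouble is.

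Two steps in your plan are not established. First, the ``occurrence-reduction trick'' you invoke produces implication $2$-clauses, and the construction's clause gadget is a $K_5$ built for exactly three literals; you would have to give a $2$-clause gadget or pad the clauses (which changes occurrence counts again). More importantly, you assert without argument that the occurrence reduction can be done while preserving both planarity of the incidence graph \emph{and} the existence of a Hamiltonian cycle through the clause vertices --- but that is precisely the delicate content of Kratochv\'il et al.'s theorem, and it is not a routine adaptation of the textbook splitting gadget. Second, rethreading the clause cycle through ordinary vertices of the attached $K_{12}$ copies rather than through $c_j$ changes the construction, so Lemmas~\ref{lem:satplanar} and~\ref{lem:planarsat} do not apply verbatim; you would need to recheck Lemma~\ref{lemma:no-split} for a cycle of non-K-vertices (plausible, since the proof only uses that each cycle vertex lies in a connected gadget whose $2$-split separates its two copies) and re-derive the ``path of K-vertices from $v_i$ to $\bar v_i$'' argument in Lemma~\ref{lem:planarsat}. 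As written, the proposal is a plan that could in principle be made to work, but it is not a proof; the paper's route via $K_{7,8}$ avoids both issues cleanly by leaving the \textsc{Planar Cycle 3-SAT} instance and the clause cycle alone.
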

\begin{proof}
We first briefly review the NP-hardness proof~\cite{KratochvilLN91} for 
\textsc{Planar Cycle 3-SAT}. Given an instance $I$ of 3-SAT with each 
variable appearing in at most $\beta$ clauses, Kratochv\'{i}l et 
al.~\cite{KratochvilLN91} constructed a corresponding instance $I'$ of 
\textsc{Planar Cycle 3-SAT} such that $I$ admits a satisfying truth assignment 
if and only if $I'$ admits a satisfiable. Their construction ensures that each 
variable in $I'$ appears in at most $\max\{\beta,6\}$ clauses. 
Tovey~\cite{Tovey} showed that 3-SAT remains NP-complete even when every 
variable is restricted to appear in at most $3$-clauses, i.e., $\beta=3$.  
Thus, \textsc{Planar Cycle 3-SAT} remains NP-hard even when each variable 
appears in at most $6$ clauses. 

Consequently, the K-vertices we used in our 
reduction can be incident to at most 8 edges, where the maximum could be 
attained at some   K-vertex in the variable gadget. Since a K-vertex 
corresponds to a $K_{12}$, the maximum degree of the graph that we used in 
our hardness  reduction can be at most $19$. We can improve this to 15 by 
using $K_{7,8}$ as the K-vertex. Recall from the proof of 
Theorem~\ref{lem:2bipartite} that the number of edges in $K_{7,8}$ is 56, which 
is exactly the number of edges in a maximal planar bipartite graph of 30 
vertices. Hence, every vertex in $K_{7,8}$ must be split and no two copies of
the same vertex are incident to the same face, which are exactly the conditions we
need for our hardness reduction.
\end{proof}

\section{Approximation and Fixed Parameter Tractability}
\label{sec:coping}

In this section, we prove that the arboricity (respectively,  pseudoarboricity) of $k$-splittable graphs is bounded by~$3k+1$ (respectively, $3k$), and that testing $k$-splittability is fixed-parameter tractable in the treewidth of the given graph.

\subsection{Approximating Split Thickness}
The \emph{arboricity $a(G)$} of a graph~$G$ is the minimum integer such that $G$ admits a decomposition
 into $a(G)$ forests. By definition, the planar split thickness of a graph is bounded by its arboricity.
 We now show that  the arboricity of a $k$-splittable graph approximates its planar split thickness within a constant factor.

 Let $G$ be a $k$-splittable graph with $n$ vertices and let $G^k$ be a $k$-split graph of $G$. 
  Since $G^k$ is planar, it has at most $3kn-6$ edges. Therefore, the number of edges in~$G$
  is also at most $(3k+1)(n-1)$: for $n$ at most $6k$, this follows simply 
  from the fact that any $n$-vertex graph can have at most $n(n-1)/2$ edges, and for larger~$n$
  this modified expression is bigger than $3kn-6$. But Nash-Williams~\cite{nash64}
  showed that the arboricity of a graph is at most $a$ if and only if every $n$-vertex subgraph 
  has at most~$a(n-1)$ edges. Using this characterization and the bound on the number 
  of edges, the arboricity is at most $3k+1$.

A forest is called a \emph{pseudoforest} if it contains at most one cycle per 
connected component. The \emph{pseudoarboricity} $p(G)$ of a graph $G$ is the minimum 
integer such that $G$ admits  a decomposition into $p(G)$ pseudoforests. The 
pseudoarboricity of a graph is at most~$p$ if and only if every $n$-vertex 
subgraph has at most~$p\cdot n$ edges~\cite{JCP}. Since a 
$k$-splittable graph with $n$ vertices may have at most $3kn-6$ edges, the 
pseudoarboricity of such a  graph is at most $3k$.  

Note that the thickness of a graph is bounded by its pseudoarboricity, and thus also approximates the planar split thickness within factor~$3$. 
Furthermore, we note that arboricity and pseudo-arboricity can be computed in 
polynomial time~\cite{Gabow1992} so this gives a polynomial time approximation 
algorithm for split thickness.

\begin{theorem}
The arboricity (respectively, pseudoarboricity) of a $k$-splittable graph is bounded by $3k+1$ (respectively, $3k$), and therefore approximates its planar split thickness within factor~$3+1/k$ (respectively, $3$).
\end{theorem}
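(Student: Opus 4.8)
The plan is to bound the number of edges in \emph{every} subgraph of a $k$-splittable graph $G$ and then invoke the classical density characterisations of arboricity and pseudoarboricity. The first step is to observe that every subgraph $H$ of $G$ on $n'$ vertices is again $k$-splittable: restricting a planar $k$-split $G^k$ of $G$ to the copies of the vertices of $H$ yields a planar graph in which every edge of $H$ has a copy, hence a planar $k$-split of $H$ on at most $kn'$ vertices. By Euler's formula, $|E(H)| \le 3kn'-6$. Combining this with the trivial bound $|E(H)| \le \binom{n'}{2}$, I would split on the size of $n'$: for $n' \le 6k+2$ the bound $\binom{n'}{2} \le (3k+1)(n'-1)$ holds because $n'/2 \le 3k+1$, and for $n' \ge 6k+2$ one checks $3kn'-6 \le (3k+1)(n'-1)$, which reduces to $n' \ge 3k-5$. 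Thus every $n'$-vertex subgraph has at most $(3k+1)(n'-1)$ edges, so Nash-Williams' theorem~\cite{nash64} gives $a(G) \le 3k+1$. The identical argument against the threshold $n' = 6k+1$ shows every $n'$-vertex subgraph has at most $3kn'$ edges, so the analogous characterisation of pseudoarboricity~\cite{JCP} gives $p(G) \le 3k$.

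For the approximation claim, I would take $k$ to be the planar split thickness $f(G)$ itself, so that $G$ is $f(G)$-splittable and the bounds above yield $a(G) \le 3f(G)+1$ and $p(G) \le 3f(G)$. As recalled earlier in the paper, $f(G) \le a(G)$ and, via the thickness, $f(G) \le p(G)$. Hence $a(G)$ lies between $f(G)$ and $(3+1/f(G))\,f(G)$, so computing the arboricity --- possible in polynomial time~\cite{Gabow1992} --- approximates $f(G)$ within factor $3+1/f(G) \le 4$; likewise the pseudoarboricity approximates $f(G)$ within factor $3$.

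The step requiring care is the edge-count inequality: using only the planar bound $3kn'-6$ is not enough for small $n'$, where a $k$-split need not realise $kn'$ vertices and the planar bound is slack; the remedy is precisely the case split, using $\binom{n'}{2}$ in the small regime and Euler's formula in the large regime, and checking that the crossover thresholds ($6k+2$ for arboricity, $6k+1$ for pseudoarboricity) make the two regimes overlap. One should also note in passing that a vertex split never decreases the number of edges, which is what allows the density bounds on $G^k$ and its restrictions to be transferred back to $G$ and its subgraphs.
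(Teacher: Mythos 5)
Your proof is correct and follows essentially the same route as the paper: bound the edge count of a $k$-splittable graph via planarity of the $k$-split, handle small $n$ with the trivial $\binom{n}{2}$ bound, and invoke the Nash-Williams and Hakimi density characterisations of arboricity and pseudoarboricity. You are in fact slightly more careful than the paper's exposition in making explicit that every subgraph of a $k$-splittable graph is itself $k$-splittable, which is the step that licenses applying the density characterisations to all subgraphs rather than just to $G$.
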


\subsection{Fixed-Parameter Tractability}
Although $k$-splittability is NP-complete, we show in this section that it is solvable in polynomial time for graphs of bounded treewidth. The result applies not only to planarity, but to 
many other graph properties.

\begin{theorem}
Let $P$ be a graph property, such as planarity, that can be tested in monadic second-order graph logic, and let $k$ and $w$ be fixed constants. Then it is possible to test in linear time whether a graph of treewidth at most~$w$ is $k$-splittable into~$P$ in linear time.
\end{theorem}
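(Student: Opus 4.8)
The plan is to express $k$-splittability into $P$ as a property definable in monadic second-order logic (MSO$_2$) on the original graph $G$, and then invoke Courcelle's theorem, which says that any MSO$_2$-definable property can be tested in linear time on graphs of bounded treewidth. The key observation is that a $k$-split $G^k$ of $G$ can be encoded combinatorially by annotating $G$ itself: for each vertex $v$, we choose a partition of the neighborhood of $v$ into at most $k$ parts (one part per copy $v_1,\dots,v_k$, where a neighbor $w$ lies in the part of $v_i$ iff the edge $(v_i,w)$ is present; since the definition only requires \emph{at least one} such edge, we may as well take a partition rather than a cover), and then $G^k$ is determined up to isomorphism by these partitions. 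Since $k$ is a fixed constant, a partition of the edges incident to $v$ into $\le k$ classes can be described by a bounded-size colouring, which in MSO$_2$ amounts to guessing $k$ edge subsets $E_1,\dots,E_k$ (a constant number of monadic second-order variables) that partition $E(G)$ and are "locally consistent" in the sense that every vertex sees at most... no, that is automatic.

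The main technical step is then to show that for each fixed choice of such edge sets $E_1,\dots,E_k$, the resulting split graph $G^k$ has bounded treewidth, and that the property "$G^k$ has property $P$" is itself expressible back in terms of $G$ and the $E_i$'s in MSO$_2$. For the treewidth bound: splitting a single vertex of degree $d$ into $k$ copies increases treewidth by at most a constant factor — more carefully, $G^k$ has treewidth at most $k(w+1)-1$ or so, since one can take a tree decomposition of $G$ and, in every bag, replace each vertex $v$ by all $k$ of its copies; an edge $(v_i,w_j)$ of $G^k$ still has both endpoints' "parents" $v,w$ together in some bag, hence $v_i,w_j$ together in the enlarged bag. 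So $G^k$ has treewidth $\le k(w+1)-1$, a constant. For the logical step: the edge relation of $G^k$ is first-order definable from the structure $(G,E_1,\dots,E_k)$ — the vertex $v_i$ is "the $i$-th copy of $v$", and $(v_i,w_j)$ is an edge of $G^k$ iff the corresponding $G$-edge $(v,w)$ lies in $E_i$ (from $v$'s side) and we need a symmetric bookkeeping from $w$'s side; handling the two-sided naming of each edge requires a fixed orientation or a doubling trick, but it is all first-order with constantly many colour classes. Hence any MSO$_2$ sentence $\varphi_P$ expressing $P$ can be relativized to an MSO$_2$ sentence $\tilde\varphi_P$ over $(G,E_1,\dots,E_k)$, and "$G$ is $k$-splittable into $P$" becomes $\exists E_1\cdots\exists E_k\,(\text{partition of }E(G))\wedge \tilde\varphi_P$, which is a single MSO$_2$ sentence over $G$ alone.

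Carrying it out: first I would fix the encoding of a $k$-split as a bounded-size edge-colouring/orientation of $G$ and verify that every $k$-split arises this way and that the construction commutes with "having property $P$". Second, I would write down the MSO$_2$ sentence $\sigma$ over $G$ with free set variables $E_1,\dots,E_k$ asserting that these sets form a valid encoding and that the induced $G^k$ satisfies $\varphi_P$ — this is where the relativization of the quantifiers of $\varphi_P$ (ranging over $\le k|V|$ vertices and the induced edge set of $G^k$) back to $G$ must be done; a vertex of $G^k$ is a pair (vertex of $G$, index in $[k]$), so a vertex-set variable of $\varphi_P$ becomes $k$ vertex-set variables over $G$, and an edge-set variable becomes $k^2$ (or $k$, with an orientation convention) edge-set variables over $G$ — all constantly many. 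Third, existentially quantify $E_1,\dots,E_k$ and apply Courcelle's theorem to the resulting closed MSO$_2$ formula on $G$, whose treewidth is $\le w$ by hypothesis; this yields the linear-time algorithm. The main obstacle is purely bookkeeping: making the translation from "quantifiers over $V(G^k)$ and $E(G^k)$" to "constantly many quantifiers over $V(G)$ and $E(G)$" precise, in particular dealing cleanly with the fact that an edge of $G$ has two endpoints each of which independently assigns it to one of its $k$ copies, so that the edge $\{v,w\}$ of $G$ spawns the $G^k$-edge between $v$'s chosen copy and $w$'s chosen copy — a fixed arbitrary orientation of $E(G)$ (itself MSO$_2$-definable, or just guessed) resolves the asymmetry and keeps the number of colour classes constant. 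Once that encoding is nailed down, the rest is a direct application of Courcelle's theorem.
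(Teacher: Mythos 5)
Your approach is essentially the paper's: encode a $k$-split by a constant number of existentially quantified edge sets, relativize the MSO$_2$ sentence for $P$ so that it speaks about the induced $k$-split graph, and apply Courcelle's theorem to the original graph $G$. The one place you gloss over a genuine technicality is the orientation of $E(G)$. In MSO$_2$ one quantifies over sets of vertices and sets of edges, not over orientations (that is, not over sets of vertex--edge incidence pairs), so an arbitrary orientation cannot be ``just guessed''; nor is an arbitrary orientation MSO$_2$-definable from the bare graph. The paper resolves this by existentially quantifying over an edge set $T$ and a root $r$, adding MSO$_2$ conditions forcing $T$ to be a depth-first search spanning tree, and then orienting every edge from ancestor to descendant --- which is total and well-defined precisely because in a DFS tree every non-tree edge joins two comparable vertices. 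Only with such an orientation in hand does it make sense to guess $k^2$ edge sets $E_{i,j}$ meaning ``tail attaches to copy $i$, head attaches to copy $j$''; your opening proposal of $k$ edge sets partitioning $E(G)$ records only one endpoint's choice per edge, although you later correct this to $k^2$. Finally, the digression on bounding the treewidth of $G^k$ is harmless but unnecessary for this route: since $\varphi_P$ is relativized back to a sentence over $G$ together with the guessed sets, Courcelle's theorem is invoked on $G$ itself, whose treewidth is $\le w$ by hypothesis, and the treewidth of $G^k$ plays no role.
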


\begin{proof}
We use Courcelle's theorem~\cite{Cou-IC-90}, according to which any monadic 
second-order property can be tested in linear time for bounded-treewidth graphs.
We modify the formula for $P$ into a formula for the graphs 
$k$-splittable into~$P$.

To do so, we need to be able to distinguish the two endpoints of each edge of 
our given graph $G$ within the modified formula by building a depth-first search 
tree. To this end, we wrap the formula 
in existential quantifiers for an edge set $T$ and a vertex $r$, and we form 
the conjunction of the formula with the conditions that every partition of 
the vertices into two subsets is crossed by an edge, that every nonempty 
vertex subset includes at least one vertex with at most one neighbor in the 
subset, and that, for every edge $e$ that is not part of $T$, there is a path 
in $T$ starting from~$r$ whose vertices include the endpoints of $e$. These 
conditions ensure that $T$ is a depth-first search tree of the given graph, 
in which the two endpoints of each edge of the graph are related to each 
other as ancestor and descendant; we can orient each edge from its ancestor 
to its descendant~\cite{Cou-DCFM-96}.

With this orientation in hand, we wrap the formula in another set of existential quantifiers, asking for $k^2$ edge sets, and we add conditions to the formula ensuring that these sets form a partition of the edges of the given graph. If we number the split copies of each vertex in a $k$-splitting of the given graph from $1$ to $k$, then these $k^2$ edge sets determine, for each input edge, which copy of its ancestral endpoint and which copy of its descendant endpoint are connected in the graph resulting from the splitting.

Given these preliminary modifications, it is straightforward but tedious to modify the formula for $P$ itself so that it applies to the graph whose splitting is described by the above variables rather than to the input graph. To do so, we need only replace every vertex set variable by $k$ such variables (one for each copy of each vertex), expand the formula into a disjunction or conjunction of $k$ copies of the formula for each individual vertex variable that it contains, and modify the predicates for vertex-edge incidence within the formula to take account of these multiple copies.
\end{proof}

\section{Conclusion}
\label{sec:con}

In this paper, we explored the split thickness of graphs. 
We proved tight bounds on the planar split thickness of complete and complete bipartite graphs. We proved that recognizing $2$-splittable graphs is NP-hard, and remains NP-hard even when the maximum degree is restricted to 15. A natural direction would be to examine the complexity for graphs with small maximum degree. 

We also proved that the planar split thickness of a graph is approximable 
within a constant factor. Furthermore, if the treewidth of the input graph is 
bounded, then for any fixed~$k$, one can decide $k$-splittability into planar 
graphs in linear time. However, this algorithm makes ample use of
Courcelle's theorem, so a more practical algorithm would be desirable.

We also showed that any graph that can be
embedded on the torus or projective plane is 2-splittable.
It remains open whether graphs with genus~$k$ are~$(k+1)$-splittable.

\subsection*{Acknowledgments} 
Most of the results of this paper were obtained at the McGill-INRIA-Victoria Workshop on Computational Geometry, Barbados, February 2015. We would like to thank the organizers of these events, as well as many participants for fruitful discussions and suggestions. The first, fourth, sixth and eighth authors acknowledge the support from NSF grant 1228639, 2012C4E3KT PRIN Italian National Research Project, PEPS egalite project and NSERC, respectively.
  
\bibliographystyle{splncs03}
\bibliography{abbrv,references}

\end{document}